\newtheorem{theorem}{Theorem}
\newtheorem{lemma}[theorem]{Lemma}
\newtheorem{proposition}[theorem]{Proposition}
\newtheorem{corollary}[theorem]{Corollary}
\newtheorem{conjecture}[theorem]{Conjecture}
\theoremstyle{definition}
\newtheorem{definition}{Definition}
\newtheorem{example}{Example}
\newtheorem{observation}{Observation}
\newtheorem{notation}{Notation}
\title{Cumulative subtraction games}
\author[1]{Gal Cohensius \thanks{galcohensius@technion.ac.il}}
\author[1]{Urban Larsson \thanks{urban031@gmail.com}}
\author[1]{Reshef Meir \thanks{reshefm@ie.technion.ac.il}}
\author[2]{David Wahlstedt\thanks{david.wahlstedt@gmail.com}}
\affil[1]{Technion--Israel Institute of Technology, Haifa, Israel}
\affil[2]{G\"oteborg, Sweden}
\newcommand{\kibitz}[2]{\ifnum\Comments=1{\color{#1}{#2}}\fi}
\newcommand{\Mod}[1]{\ (\mathrm{mod}\ #1)}
\newcommand{\Zp}{\mathbb Z_{>0}}
\newcommand{\Znn}{\mathbb Z_{\ge 0} }
\newcommand{\Z}{\mathbb Z }
\newcommand{\opt}{\ensuremath{\mathrm{opt}}}
\newcommand{\tot}{\ensuremath{\mathrm{t}}}
\newcommand{\tr}{\ensuremath{\mathrm{tr}}}
\begin{document}
	
\maketitle
\begin{abstract}
We study a variation of Nim-type subtraction games, called Cumulative Subtraction (CS). Two players alternate in removing   pebbles out of a joint pile, and their actions add or remove points to a common \emph{score}. We prove that the zero-sum outcome in optimal play of a CS with a finite number of possible actions is eventually periodic, with period $2s$, where $s$ is the size of the largest available action. This settles a conjecture by Stewart in his Ph.D. thesis (2011). Specifically, we find a quadratic bound, in the size of $s$, on when the outcome function must have become periodic. In case of exactly two possible actions, we give an explicit description of optimal play.
\end{abstract}

\section{Introduction}
Two players, Alice and Bob, stand next to a single pile of $7$ pebbles, alternately taking pebbles from it.  They compete on who takes most pebbles.  However there is a restriction on the number of pebbles they may take each turn; on each turn they can take exactly $2$ or $3$ pebbles.  Now we ask the question:  if Alice starts, should she play \emph{greedily} and take $3$ or make a \emph{sacrifice} and take $2$?

In this paper we study generalizations of this game, called {\sc cumulative subtraction} (CS). CS is related to the famous game of {\sc nim}. It has similar type of moves, but a different winning condition.   We restrict attention to games with a single heap and a common finite action set of size at least 2.

\begin{definition}[{\sc cumulative subtraction}]\label{def:CS}
An instance of {\sc cumulative subtraction} (CS), $(S, x, p)$, is composed of a finite \emph{action set} $S$, where $|S| \ge 2$, a \emph{heap} of $x\in\Z_{\ge 0}$ pebbles, and a current \emph{score} $p$. In case $p=0$, the game is also denoted by $(S,x)$, and when the heap size is generic, the game is simply called $S$ (e.g. $S$ is viewed as a ruleset). It is a two player game, and the two players \emph{Positive} and \emph{Negative} take turns moving. A \emph{position} is denoted by $(x,p)$. 
A Positive's move is of the form $(x, p) \mapsto (x-s, p+s)$, for some $s\in S$, provided that $x-s\ge 0$.  A Negative's move is of the form $(x, p) \mapsto (x-s, p-s)$, for some $s\in S$, provided that $x-s\ge 0$. A position $(t,p_{\tot})$ is terminal if $t<\min S$. The \emph{result} of a game is the terminal score $p_{\tot}$. 
\end{definition}

We are interested in \emph{optimal play} of CS, which is a zero-sum game, where Positive is the `maximizer' and Negative is the `minimizer'.  Optimal play is reflected in the outcome function.

\begin{definition}[Outcome]\label{def:outcome}
The outcome of the game $(S,x)$ is 
$$ o(x)=
\begin{cases}
    \max_{s\in S}\{s-o(x-s)\}, &\mathrm{if} \; x \ge \min{S} \\
    0,         &\mathrm{otherwise}.
\end{cases}
$$
\end{definition}
Note that the maximizing action in Definition~\ref{def:outcome} might not be unique. However uniqueness is a convenient tool in proofs of optimal play, as is further explained via  Lemma~\ref{lem:big_before}.\footnote{When we use the term maximizing action we refer to an action that maximizes the outcome. When we say maximum action, we mean $\max S$.} To this purpose we define the opt-function. 

\begin{definition}[Optimal action]\label{def:opt}
Given a game $S$, the \emph{optimal action}, $\opt:\Z_{\ge \min S}\rightarrow S $, is a mapping from the set of non-terminal positions to the maximum action $s$, such that $o(x) = s-o(x-s)$. 
\end{definition}


Note that increasing the starting score by $p$ points will increase the outcome by $p$, but it will not change the optimal sequence of actions.

\begin{observation}
 The outcome is the (von Neumann \cite{vN} PSPE) game value if the initial score is 0, and Positive starts. 
\end{observation}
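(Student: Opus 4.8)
The plan is to show that the recursively defined outcome $o$ coincides with the backward-induction (minimax) value of the game. Since every move strictly decreases the heap $x$ and a position becomes terminal once $x<\min S$, the game tree has bounded depth; it is a finite perfect-information zero-sum game, so its value is well defined and computed by backward induction, and the only real work is to verify that this value satisfies exactly the recursion defining $o$ in Definition~\ref{def:outcome}.

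First I would make the dependence on whose turn it is explicit. Write $V_P(x,p)$ for the value of the position $(x,p)$ when Positive is to move, and $V_N(x,p)$ when Negative is to move. Backward induction, using the move rules of Definition~\ref{def:CS} with Positive the maximizer and Negative the minimizer, gives $V_P(x,p)=V_N(x,p)=p$ whenever $x<\min S$, and for $x\ge\min S$,
\[
V_P(x,p)=\max_{s\in S,\,s\le x} V_N(x-s,\,p+s),\qquad V_N(x,p)=\min_{s\in S,\,s\le x} V_P(x-s,\,p-s).
\]
Next I would record the score-shift invariance already noted in the text: raising the current score by $p$ raises every terminal result by $p$, so $V_P(x,p)=v_P(x)+p$ and $V_N(x,p)=v_N(x)+p$, where $v_P(x):=V_P(x,0)$ and $v_N(x):=V_N(x,0)$. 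This is a one-line induction on the heap size. Substituting back gives $v_P(x)=\max_{s}\,[\,v_N(x-s)+s\,]$ and $v_N(x)=\min_{s}\,[\,v_P(x-s)-s\,]$.

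The key step is the symmetry $v_N=-v_P$, which I would prove by induction on $x$. It holds at terminal positions, where both sides are $0$. If $v_N(y)=-v_P(y)$ for all $y<x$, then $v_N(x)=\min_s[v_P(x-s)-s]=-\max_s[s-v_P(x-s)]=-\max_s[v_N(x-s)+s]=-v_P(x)$. Feeding $v_N=-v_P$ into the expression for $v_P$ collapses the two value functions into a single self-referential recursion, $v_P(x)=\max_{s\in S,\,s\le x}\{\,s-v_P(x-s)\,\}$ with $v_P(x)=0$ for $x<\min S$, which is precisely Definition~\ref{def:outcome}. Hence $o(x)=v_P(x)=V_P(x,0)$, i.e. $o(x)$ is the game value with initial score $0$ and Positive to move.

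The main conceptual point to get right is this Positive/Negative symmetry together with the turn-alternation, since it is exactly what lets the minimax value be written as the single recursion defining $o$ rather than as a coupled pair $(v_P,v_N)$; the score-shift invariance and the existence of the backward-induction value are routine bookkeeping, and the $\max/\min$ signs must be tracked carefully so that the minus sign from $v_N=-v_P$ converts the minimizer's $\min$ into the $\max$ appearing in Definition~\ref{def:outcome}.
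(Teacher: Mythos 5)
Your proof is correct. Note that the paper offers no proof at all for this statement---it is presented as an Observation, treated as immediate from the definitions---so there is no argument of the paper's to compare against; your write-up supplies the formalization the authors leave implicit. The two facts you isolate are exactly the right ones: the score-shift invariance $V_P(x,p)=v_P(x)+p$, $V_N(x,p)=v_N(x)+p$, and the symmetry $v_N=-v_P$, which is what converts the minimizer's $\min$ into the $\max$ of Definition~\ref{def:outcome} and collapses the coupled pair of minimax recursions into the single self-referential recursion defining $o$. The induction steps are all sound (each move removes at least $\min S\ge 1$ pebbles, so the backward induction is well founded, and the symmetry induction correctly uses the hypothesis only at smaller heap sizes). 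One detail in your favor: you restrict the maxima and minima to legal actions $s\le x$, which the paper's Definition~\ref{def:outcome} omits but clearly intends; your version is, if anything, more careful than the statement being proved.
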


\begin{definition}[Game convergence]\label{def:conv}
A game $S$ converges at position $x>0$, if, for all positions $y\ge x$, $\opt(y)$ is constant, but $\opt(x-1)\ne \opt(y)$. This is denoted by $\xi(S)=x$. If there is no such $x$ then the game does not converge.
\end{definition}

\begin{observation}
If the game $S$ converges, then $\xi(S)\ge \max S$, because $\opt(\max S -1)< \opt(\max S)=\max S$. 
\end{observation}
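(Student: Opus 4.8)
The plan is to reduce the Observation to two facts about the optimal action and then to establish those facts by bounding the outcome function from below. Suppose $S$ converges, say $\xi(S)=x$, so that by Definition~\ref{def:conv} the map $\opt$ is constant, equal to some value $c$, on every position $y\ge x$, while $\opt(x-1)\ne c$. I will show that $\opt(\max S)=\max S$ and $\opt(\max S-1)<\max S$, so in particular $\opt(\max S-1)\ne\opt(\max S)$. If we had $x\le \max S-1$, then both $\max S-1$ and $\max S$ would lie in the range $[x,\infty)$ on which $\opt$ is constant, forcing $\opt(\max S-1)=c=\opt(\max S)$, a contradiction. Hence $x\ge\max S$, which is exactly the claim $\xi(S)\ge\max S$.

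The hard part will be computing $\opt(\max S)$, since this requires ruling out that some ``sacrificing'' action beats the greedy move $s=\max S$. The tool I would use is the two-sided bound
\[
0\le o(y)\le a(y)\quad\text{for all } y\ge\min S,
\]
where $a(y)$ denotes the largest available action $\max\{s\in S:s\le y\}$, together with $o(y)=0$ for $y<\min S$. I would prove this by simultaneous strong induction on $y$. For the upper bound, every available action $s\le y$ contributes $s-o(y-s)\le s\le a(y)$, using $o(y-s)\ge 0$ from the induction hypothesis (or $o(y-s)=0$ when $y-s<\min S$). For the lower bound it suffices to find one action giving a non-negative value: playing $s=a(y)$ leaves a heap $y-a(y)\le y$, and since $a$ is non-decreasing the hypothesis yields $o(y-a(y))\le a(y-a(y))\le a(y)$ (or $o(y-a(y))=0$), so $a(y)-o(y-a(y))\ge 0$. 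The delicate case is exactly the lower bound at positions $y<\max S$, where the maximal action is unavailable; this is why the induction must carry the refined bound $o\le a$ rather than the crude $o\le\max S$.

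Finally, I would deduce the two facts. At $y=\max S$ every action is available, the greedy move yields $\max S-o(0)=\max S$, while any smaller $s<\max S$ yields $s-o(\max S-s)\le s<\max S$ because $o\ge 0$; hence $\max S$ is the unique maximizing action and $\opt(\max S)=\max S$. Since $|S|\ge 2$ gives $\min S\le\max S-1$, the position $\max S-1$ is non-terminal, and because the action $\max S$ is unavailable there, Definition~\ref{def:opt} forces $\opt(\max S-1)\le\max S-1<\max S$. Together these give $\opt(\max S-1)<\opt(\max S)=\max S$, completing the reduction of the first paragraph.
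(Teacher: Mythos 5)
Your proof is correct, and its top-level skeleton is exactly the paper's inline justification: establish $\opt(\max S)=\max S$ and $\opt(\max S-1)<\max S$, then note that constancy of $\opt$ on $[\xi(S),\infty)$ would be violated if $\xi(S)\le \max S-1$. Where you genuinely diverge is in how the supporting bound on the outcome is obtained. The paper's route is its Lemma~\ref{lem:bounded_outcome}, $0\le o(x)\le \max S$, proved by a strategic/turn-counting argument (greedy play by Positive guarantees at least $0$ because she moves at least as often as Negative); that non-negativity is all the observation needs, since any non-greedy action $s<\max S$ at position $\max S$ then yields $s-o(\max S-s)\le s<\max S$. You instead prove the bound by simultaneous strong induction on the recursive definition of $o$, and you correctly identify that the naive invariant $o\le \max S$ is too weak for the inductive lower bound at positions below $\max S$, which forces you to carry the sharper invariant $0\le o(y)\le a(y)$ with $a(y)=\max\{s\in S: s\le y\}$. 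What each approach buys: the paper's argument is shorter and works directly from the game interpretation, while yours is self-contained (it never leaves the recursion defining $o$) and delivers a strictly stronger conclusion, namely $o(y)\le a(y)$, which refines the paper's upper bound at small heap sizes. Both are valid; yours is more work than the observation strictly requires, but the refinement is a nice by-product.
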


 A sequence $(x_i)$ is \textit{periodic} if there is a $p$ such that, for all $i$, $x_i=x_{i+p}$. If $p$ is the smallest such number, then the sequence is periodic with period $p$. 
 
	\begin{definition}[Eventual periodicity]
	A function $g: \Znn\rightarrow\Z$ is eventually periodic, if there is a $p\in \Zp$, such that $g(x)=g(x+p)$, for all sufficiently large $x\in \Z$. It is eventually periodic with period $p '$, if $p'$ is the smallest $p$ such that $g(x)=g(x+p')$, for all sufficiently large $x\in \Z$.
	\end{definition}

Because of our convention that Positive starts, the relative number of actions that players will have throughout the game is:
either Positive and Negative play the same number of actions, or Positive has an extra turn.

\begin{definition}[Greedy action and sacrifice]\label{def:greedy}
Consider a game $(S,x)$. A greedy action is $\max\{s\in S\mid s\le x\}$, and a sacrifice is any action that is not greedy.
\end{definition}

Is the greedy action optimal in every position of CS game?  As we hinted in the first paragraph, the answer is no. 
\begin{example}\label{ex:2,3 game}
Consider the game $(S=\{2,3\}, x=7)$.  The optimal action is $\opt(7)=2$ because $o(5)= 1$ and $o(4)=3$ so $o(7)=\max\{-3+3, -1+2\}=1$.  That is, Positive's optimal strategy is a sacrifice, taking $2$ on her first action.
\end{example}

\begin{lemma}\label{lem:bounded_outcome}
    For all games $(S,x)$, the outcome is bounded between $0$ and the maximum action, i.e. $0 \le o(x) \le \max{S}$
\end{lemma}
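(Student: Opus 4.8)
The plan is to argue by strong induction on the heap size $x$, reading the maximum in Definition~\ref{def:outcome} as ranging over the \emph{feasible} actions $s \in S$ with $s \le x$ (as forced by the move rules of Definition~\ref{def:CS}; an infeasible action leaves no legal position to recurse into). The base case is immediate: for a terminal position $x < \min S$ we have $o(x) = 0$, and since actions are positive integers, $0 \le 0 \le \max S$. For the inductive step, fix a non-terminal $x \ge \min S$ and assume both bounds for all smaller heaps. The upper bound is the easy direction: by the induction hypothesis $o(x-s) \ge 0$ for every feasible $s$, so each term obeys $s - o(x-s) \le s \le \max S$, and taking the maximum yields $o(x) \le \max S$.

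The lower bound is the crux, and I expect it to be the main obstacle. The naive idea — play $\max S$ and invoke $o(x - \max S) \le \max S$ to get a nonnegative term — only works when $x \ge \max S$. When $\min S \le x < \max S$ the action $\max S$ is infeasible, and the hypothesis $o(x-s) \le \max S$ is too weak to show that any feasible action gives a nonnegative term. I would resolve this by \emph{strengthening} the induction hypothesis on the upper side to $o(x) \le g(x)$, where $g(x) := \max\{s \in S : s \le x\}$ is the greedy action value of Definition~\ref{def:greedy}. This sharper bound is proved by exactly the same computation as the plain upper bound, since every feasible $s$ satisfies $s \le g(x)$, so $s - o(x-s) \le s \le g(x)$.

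With the strengthening available, the lower bound falls out cleanly. The key structural fact is that $g$ is non-decreasing. Taking the greedy action $s^* = g(x) \le x$, either $x - s^* < \min S$, in which case $o(x - s^*) = 0 \le s^*$ directly, or $x - s^* \ge \min S$, in which case the strengthened hypothesis and monotonicity give $o(x - s^*) \le g(x - s^*) \le g(x) = s^*$. Either way the greedy term satisfies $s^* - o(x - s^*) \ge 0$, so $o(x) \ge 0$, and since $g(x) \le \max S$ the original statement follows. I would therefore present the final write-up as a single strong induction establishing the conjunction $0 \le o(x) \le g(x)$ for non-terminal $x$ (and $o(x) = 0$ at terminal $x$); the only inputs beyond the recursion are the positivity of actions and the monotonicity of $g$, and I anticipate no further technical difficulty once the strengthened upper bound is in place.
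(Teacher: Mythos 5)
Your proof is correct, but it takes a genuinely different route from the paper's. The paper gives a two-line \emph{strategic} argument: since Positive moves first, she plays at least as many actions as Negative, and by playing greedily each of her actions is at least as large as Negative's following one (the heap only shrinks), so greedy play guarantees her a result of at least $0$; symmetrically, Negative plays at most one action fewer than Positive, so greedy play by Negative guarantees a result of at most $\max S$. Your argument instead works by strong induction directly on the recursion in Definition~\ref{def:outcome}, and your key move --- strengthening the upper bound to $o(x)\le g(x)$, where $g(x)$ is the greedy (largest feasible) action --- is exactly what is needed to push the lower bound through for $\min S \le x < \max S$, where the naive induction stalls; monotonicity of $g$ then closes the loop, and the joint induction on the conjunction $0\le o(x)\le g(x)$ is sound. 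As for what each approach buys: the paper's proof is shorter and makes the strategic content explicit (greedy play certifies both bounds), a viewpoint it reuses later, e.g.\ Negative's greedy strategy $\sigma$ in the proof of Theorem~\ref{Thm:convergence}; your proof is more rigorous relative to the formal definition of $o$ --- it never needs the unstated pairing claim that a greedy action dominates the opponent's subsequent action --- and it yields a slightly sharper conclusion for free, namely $o(x)\le g(x)$, i.e.\ the outcome never exceeds the largest action actually available at $x$, which is stronger than $\max S$ throughout the endgame.
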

\begin{proof}
    Since Positive plays at least the same number of actions as Negative plays, by playing greedily she guarantees a result of at least $0$.  Since Positive plays at most one more action than Negative, if Negative plays greedily he guarantees a result of at most $\max{S}$.
\end{proof}


\section{Contribution}
Our main result (see Section~\ref{sec:arbsup}) is that all CS games converge (to the maximum action), and thus the outcome of any CS is eventually periodic.  The results are

\begin{enumerate}

\item In Theorem~\ref{Thm:convergence} we give an upper bound on the convergence of any CS game.  The bound is quadratic in $\max{S}$.  

\item In Corollary~\ref{cor:periodicity} we prove that any game $S$, is eventually periodic with period $2\max S$.  That is, $o(x+2\max S)=o(x)$, for any large enough position $x$.  This is a proof of a conjecture by Stewart from \cite{S}.

\item In Theorem~\ref{thm:fullsup}, we fully solve the case where all actions up to $\max{S}$ are permitted (the case of full support). 
    
\item In Theorem~\ref{thm:2actions} we describe explicitly the optimal play for the class of games with exactly two actions, and in Corollary~\ref{cor:2action_outcomes} we specify the corresponding outcome.  In Corollary~\ref{cor:2action_explicit_convergence} we give an explicit formula for convergence.

\item Section~\ref{sec:truncsup} concerns so-called \emph{truncated games} (small actions are cut out). We were not able to solve the whole class, but we guide the readers towards a thrilling conjecture,  Conjecture~\ref{con:duality}.

\end{enumerate}

\section{CS with arbitrary support} \label{sec:arbsup} 
In this section we do not restrict $S$ beyond its definition, a finite set of size at least two.  Let us begin with a general lemma.
\begin{lemma} \label{lem:big_before}
    For any pair of sequences of optimal play actions, if one of the players, say Positive, switches the order between two actions such that the larger action is played before the smaller one, then this switch cannot decrease the outcome.
\end{lemma}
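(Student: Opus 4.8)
The plan is to exploit the fact that the final score of any complete play is exactly the sum of the values of Positive's actions minus the sum of the values of Negative's actions, so that permuting one player's actions among her own turns leaves these two sums untouched; the only quantities that could possibly be affected are the legality of the intervening moves and the opponent's optimal responses, both of which depend on the actual heap sizes. Since the players alternate, any two consecutive turns of Positive are separated by exactly one turn of Negative. Hence I would first reduce the claim to the case of an \emph{adjacent} transposition: the general swap of two of Positive's actions is a composition of transpositions of consecutive Positive moves, and if each such elementary step cannot decrease the outcome, then neither can their composition.

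For the elementary step, fix a position $(x,p)$ with Positive to move, and let $a<a'$ be the two actions to be exchanged, played at Positive's two consecutive turns with a single Negative move in between. After the triple (Positive, Negative, Positive) has been played, the heap has dropped to $x-a-a'-b$ and the score has changed by $a+a'-b$, where $b\in S$ is Negative's action; note that both of these quantities are symmetric in $a$ and $a'$. Using the symmetry of Definition~\ref{def:outcome} — namely that the value to the player to move at heap $y$ is $o(y)$, so that after Positive's second action the continuation from a Negative-to-move heap $H$ contributes $-o(H)$ — Positive's value under either ordering equals
\[
 (a+a')-\max_{b}\bigl\{\,b+o(x-a-a'-b)\,\bigr\},
\]
where $b$ ranges over the actions of $S$ for which all three moves are legal. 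The key point is that this feasible range is the same for both orderings: playing the larger action first only tightens Negative's own legality constraint from $b\le x-a$ to $b\le x-a'$, but in either case the binding requirement that Positive can still complete the pair is $b\le x-a-a'$, which already implies both. Consequently the two orderings yield identical values, and in particular the big-before-small ordering cannot decrease the outcome.

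The main obstacle is justifying that Negative's re-optimization cannot help him when he is handed the smaller heap produced by Positive's playing the larger action first. This is exactly what the previous display settles: because the objective $b+o(x-a-a'-b)$ and the effective feasible set $\{b\in S: b\le x-a-a'\}$ are invariant under exchanging $a$ and $a'$, Negative's best reply attains the same extremal value in both lines, even if the optimizing action itself differs. It then remains to confirm that legality is preserved all the way through the adjacent-transposition reduction — which follows since each elementary step re-synchronizes the heap immediately after the exchanged pair — and one may invoke Lemma~\ref{lem:bounded_outcome} if a cruder estimate $0\le o\le \max S$ is wanted, although the exchange identity itself needs nothing beyond Definition~\ref{def:outcome}.
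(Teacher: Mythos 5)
Your reduction to adjacent transpositions is fine, but the elementary step contains a genuine gap: you assert that in both orderings Negative's best reply may be taken over the ``effective feasible set'' $\{b\in S: b\le x-a-a'\}$, i.e., over those $b$ for which all three moves are legal. Nothing in the game forces Negative to keep Positive's second action legal. In the small-first ordering Negative may choose any $b\le x-a$, in the large-first ordering any $b\le x-a'$; these sets differ, and both can contain ``blocking'' moves $b>x-a-a'$ after which Positive cannot complete the pair. Your claimed identity of values therefore fails, and with it the argument. Concretely, take $S=\{2,3\}$, $x=7$, $(a,a')=(2,3)$, with optimal continuation after the pair. Small first: Positive takes $2$; Negative's best reply is the blocking move $3$ (heap $2$, so Positive must substitute a $2$), giving score $2-3+2=1$. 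Large first: Positive takes $3$; Negative's best reply is again $3$ (heap $1$, terminal), giving score $3-3=0$. Your display predicts the value $5-(2+o(0))=3$ for both lines; in fact the two lines give $1$ and $0$, which are not even equal to each other --- indeed here the big-before-small order is strictly \emph{worse}. This does not contradict the lemma, because the pair $(2,3)$ does not arise from optimal play at $7$ (optimal play there is $2;3;2$ with Positive playing two $2$'s); but it shows that your argument, which never invokes the optimal-play hypothesis, cannot be correct as written.

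The paper's mechanism is different and yields an inequality rather than an equality: moving the larger action earlier only shrinks the heap at the intermediate positions, hence only shrinks the set of action sequences available to the opponent; since the terminal score depends only on the multisets of actions actually played, the opponent's best response after the switch was already available before it, so the outcome cannot decrease. That asymmetry --- Negative has weakly \emph{fewer} options after the switch --- is exactly the content of the lemma, and it is precisely what your symmetrization erases. Any repair of your approach must treat the blocking replies $b$ with $x-a-a'<b\le x-a$ honestly and must use the hypothesis that the original sequences come from optimal play, since, as the example shows, the conclusion is false for arbitrary committed sequences.
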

\begin{proof}
    By this switch, the opponent does not get any new playing possibilities, and thus the opponent's new optimal play is a sequence of actions that were available before the switch.
\end{proof}
By this lemma, without loss of generality, in this section we  assume that both players play non-increasing actions, and in particular, for each game, optimal play will give the unique sequence of actions as prescribed by the opt-function.\footnote{This idea is useful in this section, but other tools as for example the below ``complementary strategy'' does not use it.}
\begin{definition}
    Given a game $S$, the \textit{endgame} is the set of positions strictly smaller than $\max{S}$. A player \emph{enters} the endgame, playing from position $x \ge \max{S}$, if she plays action $a$ and $x-a < \max{S}$. The term \emph{endgame play} refers to the action that enters the endgame together with all subsequent moves.
\end{definition}

\begin{theorem} \label{Thm:convergence}
Consider a game $S$.  The upper bound on $\xi(S)$ is
\begin{align}
    \xi(S) \le 2(\max S)^2 \label{eq: convergence_bound}
\end{align}
\end{theorem}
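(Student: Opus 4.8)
The plan is to reduce the convergence statement to a single scalar inequality and then bound its lifespan. Write $s=\max S$. For every $x\ge s$ the maximum action is one of the options in Definition~\ref{def:outcome}, so $o(x)\ge s-o(x-s)$, i.e.
\[
o(x)+o(x-s)\ge s .
\]
Because the opt-function (Definition~\ref{def:opt}) selects the \emph{largest} maximizing action and $s$ is the largest action, I get the clean equivalence
\[
\opt(x)=s \iff o(x)+o(x-s)=s ,
\]
the forward direction being immediate and the backward direction holding because the displayed inequality is then tight, so $s$ itself attains the maximum. Hence, setting the \emph{excess} $\delta(x):=o(x)+o(x-s)-s\ge 0$, the game converges (to the action $s$) exactly when $\delta$ is eventually $0$, and $\xi(S)$ is one more than the last position with $\delta(x)>0$. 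Once $\delta\equiv0$ past $\xi(S)$ the recurrence collapses to $o(x)=s-o(x-s)$, from which $o(x)=o(x-2s)$ and the period-$2s$ statement follow; so the whole theorem is equivalent to the bound ``$\delta(x)>0 \Rightarrow x< 2s^2$''.

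Next I would describe the mechanism that makes sacrifices disappear. Expanding the definition, $\delta(x)>0$ means that some sacrifice $s'<s$ beats the greedy move, which by a one-line rearrangement is exactly the presence of a large local variation $o(x-s)-o(x-s')>s-s'$. Thus a sacrifice can be optimal only where the outcome function has a steep drop of size exceeding the action gap. Since $0\le o\le s$ (Lemma~\ref{lem:bounded_outcome}), such drops are bounded in height by $s$, and the heart of the argument is to show that they cannot regenerate indefinitely. Here I would use the descending normal form from Lemma~\ref{lem:big_before}: in optimal play both players may be taken to play non-increasing actions, so from a large heap every optimal line first plays a block of maximal moves and only then enters the endgame (positions $<s$); the score contributed by a maximal Positive move and the following maximal Negative move cancels, so the value of a large position is governed entirely by how the line eventually meets the fixed endgame, up to the bounded corrections recorded by $\delta$.

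The quantitative core, and the step I expect to be the main obstacle, is to turn boundedness into the factor $s$ and the period into the factor $2s$, yielding $2s^2$. The plan is to exhibit an integer potential $\Phi_k\in\{0,1,\dots,s\}$ attached to the $k$-th block $[\,k\cdot 2s,\ (k+1)\cdot 2s\,)$ of positions --- for instance the largest excess occurring in the block, or the number of the $s$ complementary residue pairs $\{r,r+s\}$ that have not yet stabilized --- and to prove that $\Phi_k$ is non-increasing in $k$ and strictly decreases as long as it is positive. Monotonicity should come from propagating the tight inequality $o(x)+o(x-s)\ge s$ forward through the recurrence together with the non-increasing normal form, and the strict decrease from the fact that a surviving steep drop must flatten by at least one unit per period once no deeper drop feeds it. Given such a $\Phi$, its range $\{0,\dots,s\}$ forces at most $s$ positive blocks, so the last block with $\delta>0$ ends before position $2s\cdot s=2s^2$, which is the claimed bound.

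The genuine difficulty is precisely the monotonicity-and-strict-descent of $\Phi$: the naive attempt to show $o(x+2s)\le o(x)$ by induction in fact propagates the opposite inequality, so there is no term-by-term monotonicity of $o$ along residue classes, and the potential must be a coarser, carefully chosen functional of a whole block. To control it I would invoke a complementary (mirroring) strategy for the minimizer --- Negative copies Positive's previous action, so every matched pair contributes $0$ to the score --- to obtain the missing upper bound $o(x)\le s-o(x-s)$ for sufficiently large $x$, i.e. the reverse of the automatic inequality; pinning down ``sufficiently large'' quantitatively, uniformly over the structure of $S$, is the crux that delivers the quadratic threshold.
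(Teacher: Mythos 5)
Your reformulation is fine as far as it goes: for $x\ge s$ the excess $\delta(x)=o(x)+o(x-s)-s$ is nonnegative, it vanishes exactly when the greedy action is optimal (using that $\opt$ picks the largest maximizing action), and the theorem would indeed follow from ``$\delta(x)>0\Rightarrow x<2s^2$''. But the proof stops at the reformulation. The entire quantitative content --- the existence of a block potential $\Phi_k\in\{0,\dots,s\}$ that is non-increasing and strictly decreasing while positive --- is only announced (``should come from'', ``I would invoke''), never constructed, and you yourself flag it as the crux. Moreover, the two mechanisms you offer for it do not work as stated: term-by-term monotonicity $o(x+2s)\le o(x)$ fails (as you concede), and the mirroring strategy cannot deliver the ``missing upper bound'' $o(x)\le s-o(x-s)$, because that inequality is \emph{false} before convergence (e.g.\ at the sacrifice positions $X^*$ of the two-action games), so any derivation of it ``for sufficiently large $x$'' with an explicit threshold is precisely the theorem being proved. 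The proposal is therefore circular at its core: the gap is not a technical detail but the whole argument.

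For contrast, the paper's proof needs no potential and no monotonicity along residue classes; it counts sacrifices directly. If an optimal strategy of Positive contained at least $s=\max S$ sacrifices before the endgame, let Negative answer greedily: each sacrifice then loses at least one point against Negative's $s$-moves, so the score is at most $-s$ when the endgame is reached, and the endgame play (using the non-increasing normal form of Lemma~\ref{lem:big_before}) can recoup at most $s-1$; the outcome is negative, contradicting $o\ge 0$ (Lemma~\ref{lem:bounded_outcome}). Hence optimal play contains fewer than $s$ sacrifices, and since each sacrifice (size at most $s-1$) together with the opponent's greedy response (size at most $s$) consumes at most $2s-1$ pebbles, while the endgame holds fewer than $s$ pebbles, every position from which a sacrifice is optimal lies below $s+(s-1)^2+(s-1)s\le 2s^2$. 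Note that the factor $s$ you hoped to extract from the range of $\Phi$ appears here as a cap on the \emph{number} of sacrifices, obtained by a one-shot strategy comparison rather than a descent argument --- that is the idea your plan is missing, and it is what converts Lemma~\ref{lem:bounded_outcome} into the quadratic threshold.
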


\begin{proof}
      
    Consider play from some large  position until one of the players enters the endgame. Suppose that one of the players' strategy, say Positive's, consists in playing at least $\max S$ sacrifices before the endgame. 
    
    We will find a strategy $\sigma$ by Negative that produces a negative outcome. By Lemma~\ref{lem:bounded_outcome}, this  will imply that Positive's strategy cannot be optimal play. 
    
    Negative's strategy $\sigma$ is greedy play.
    
    Positive played at least $\max S$ sacrifices before the endgame.  There are two cases:
    \begin{itemize}
        \item[(i)] Positive enters the endgame
        \item[(ii)] Negative enters the endgame
    \end{itemize}
    
    In case (i), the score just before Positive enters the endgame is no more than $(-\max S)$.  In case (ii), the score after Negative entered the endgame is no more than $(-\max S)$.  This is true in both cases because Negative's greedy strategy $\sigma$ consisted exclusively of $\max S$ actions, whereas Positive has played at least $\max S$ sacrifices, so the outcome decreases by at least $1$ with each sacrifice. 
    
     
    By Lemma~\ref{lem:big_before} we may assume that players play non-increasing actions, i.e., at each stage of game, if more than one action produces the outcome, players will choose the largest of those actions. 
    Therefore, in case (i) Positive enters the endgame by a sacrifice, hence the score remains strictly smaller than 0 after Positive's action.  Thus, Negative assures an outcome strictly smaller than 0 (by Lemma~\ref{lem:bounded_outcome} applied to the players in reversed rolls). 
    
    In case (ii) when Negative enters the endgame,  Positive plays first below the heap size of $\max S$. By definition of the endgame, Positive can increase the score by at most $(\max S-1)$. 
    
    Thus, either way the outcome will be negative, and by the lower bound in Lemma~\ref{lem:bounded_outcome}, we have reached the desired contradiction.
    
    Therefore any optimal strategy, by either player,  must consist of less than $\max S$ sacrifices.

    This gives the bound in the theorem because Positive plays less than $\max S$ sacrifices in optimal play. Namely 
     \begin{align}
     \xi(S)&\le\max S+(\max S-1)(\max S-1)+(\max S-1)\max S\label{eq:bound}\\
     &\le 2(\max S)^2 \notag
     \end{align}
    where the terms in \eqref{eq:bound} represent: `upper bound on endgame size', `upper bound of the total size of Positive's sacrificing actions' and `upper bound of the total size of Negative's actions in response to Positive's sacrifices'. This concludes the proof.
 \end{proof}
 
    
    

\begin{lemma}\label{lem:converges->periodic}
Consider CS.  If the sequence of optimal actions converges, then the sequence of outcomes is eventually periodic.
\end{lemma}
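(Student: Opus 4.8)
The plan is to show that once the optimal action function $\opt$ becomes constant equal to $\max S$ (i.e., the game converges at $\xi(S)$), the outcome function satisfies a simple recurrence that forces periodicity with period $2\max S$. The key observation is that Definition~\ref{def:outcome} gives $o(x)=\max_{s\in S}\{s-o(x-s)\}$, but convergence tells us that for all $x\ge \xi(S)$ the maximizing action is $\max S$. Hence, writing $m=\max S$, for every such $x$ we have the clean two-step relation $o(x)=m-o(x-m)$.

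First I would fix $x\ge \xi(S)+m$ so that both $x$ and $x-m$ lie in the converged region. Applying the relation once gives $o(x)=m-o(x-m)$, and applying it again to $o(x-m)$ (using that $x-m\ge \xi(S)$) gives $o(x-m)=m-o(x-2m)$. Substituting, I obtain
\begin{align}
o(x)=m-\bigl(m-o(x-2m)\bigr)=o(x-2m),\notag
\end{align}
which is exactly eventual periodicity with period dividing $2m=2\max S$. This immediately establishes that the outcome sequence is eventually periodic whenever the optimal actions converge, which is the statement of the lemma (the claim is only about eventual periodicity, not about the period being minimal).

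The main subtlety, and the step I would be most careful about, is justifying that convergence of the \emph{optimal action} sequence really does pin down the value of $o$ via the single term $s=\max S$ in the max. By Definition~\ref{def:opt}, $\opt(x)$ is the largest maximizing action, so when $\opt(x)=\max S$ we do get $o(x)=\max S-o(x-\max S)$; Definition~\ref{def:conv} guarantees this holds for all $y\ge \xi(S)$. So the telescoping above is valid for all $x\ge \xi(S)+\max S$, and the threshold ``for all sufficiently large $x$'' in the definition of eventual periodicity is met. The only thing to double-check is that the two applications of the recurrence both stay inside the converged range, which is why I shift the starting threshold up by $\max S$.

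I would then remark that this argument does not require knowing \emph{that} the game converges — that is supplied separately by Theorem~\ref{Thm:convergence} — but only the conditional hypothesis of the lemma. This cleanly separates the combinatorial convergence bound from the elementary algebra that converts a converged action sequence into a periodic outcome, and it sets up the subsequent Corollary~\ref{cor:periodicity}, where the unconditional convergence from Theorem~\ref{Thm:convergence} is combined with this lemma to conclude that every CS game has an eventually periodic outcome with period $2\max S$.
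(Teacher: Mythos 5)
Your proof is correct and takes essentially the same approach as the paper: apply the converged recurrence twice to obtain $o(x)=s-(s-o(x-2s))=o(x-2s)$ for all sufficiently large $x$. The only cosmetic difference is that you assume the constant action is $\max S$, which the lemma's hypothesis alone does not guarantee (that identification comes from Theorem~\ref{Thm:convergence}); the paper's proof uses an arbitrary constant action $s$, and your algebra goes through verbatim with $s$ in place of $\max S$.
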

\begin{proof}
If both players optimally play the same action $s$ from all sufficiently large heap sizes $x$, then $o(x)=s-o(x-s)=s-s+o(x-2s)=o(x-2s)$. 
\end{proof}

\begin{corollary}\label{cor:periodicity}
     Any game $S$, is eventually periodic with period $2\max S$.  That is to say, $o(x+2\max S)=o(x)$, for any large enough position $x$.
\end{corollary}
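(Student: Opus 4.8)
The plan is to combine the convergence bound of Theorem~\ref{Thm:convergence} with the periodicity mechanism of Lemma~\ref{lem:converges->periodic}. First I would invoke Theorem~\ref{Thm:convergence}: since $\xi(S)\le 2(\max S)^2<\infty$, the game $S$ converges, so $\opt$ is eventually constant. Moreover, the proof of that theorem shows that optimal play uses fewer than $\max S$ sacrifices, hence from every sufficiently large heap the greedy action is optimal; since a greedy action on a full heap is $\max S$, the constant value of $\opt$ past $\xi(S)$ is precisely $\max S$, i.e.\ $\opt(x)=\max S$ for all $x\ge\xi(S)$.

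Next I would feed this into Lemma~\ref{lem:converges->periodic}. Write $s=\max S$. For every heap size $x$ with $x-s\ge\xi(S)$ (equivalently $x\ge\xi(S)+\max S$), both $\opt(x)$ and $\opt(x-s)$ equal $s$, so Definition~\ref{def:outcome} gives $o(x)=s-o(x-s)$ and $o(x-s)=s-o(x-2s)$. Substituting the second relation into the first cancels the two $s$ terms and yields $o(x)=o(x-2s)=o(x-2\max S)$. Re-indexing gives $o(x+2\max S)=o(x)$ for every sufficiently large $x$, which is exactly the asserted eventual periodicity with period $2\max S$.

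There is essentially no obstacle here, since the statement is a direct corollary of the two preceding results; the only point requiring a little care is the bookkeeping of the threshold. One must ensure that both $x$ and $x-s$ lie in the converged region, so that the recursion of Definition~\ref{def:outcome} can be unwound twice, which forces the periodicity to hold from roughly $\xi(S)$ onward rather than exactly at $\xi(S)$. I would therefore state the threshold explicitly (e.g.\ $x\ge\xi(S)+\max S$) to keep the ``for any large enough position'' clause unambiguous, and I would note that $2\max S$ is a period in the divisibility sense $o(x+2\max S)=o(x)$ rather than necessarily the minimal period, which is consistent with the ``that is to say'' phrasing of the statement.
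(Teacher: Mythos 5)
Your proposal is correct and follows exactly the paper's route: it combines Theorem~\ref{Thm:convergence} (convergence of the optimal action to $\max S$) with the recursion-unwinding argument of Lemma~\ref{lem:converges->periodic}, which is precisely what the paper's one-line proof does. The extra care you take with the threshold ($x\ge\xi(S)+\max S$) is a reasonable tightening of the same argument, not a different approach.
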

\begin{proof}
    Combine Theorem~\ref{Thm:convergence} with Lemma~\ref{lem:converges->periodic}.
\end{proof}

\section{CS with full support} \label{sec:full_support}
 Consider a CS where the set of possible actions contains all the integers from 1 up to $s_1$, i.e., $S = \{1,2,\ldots ,s_1\} $. We call this game CS with \emph{full support}.  In this game, optimal play is to play greedy at each position. 
	\begin{theorem}\label{thm:fullsup}
	In CS with \emph{full support}, the optimal play is $x$ for any position $x<s_1$ and $s_1$ for any position $x \ge s_1$.   That is, each CS with full support converges at $s_1$, and moreover its outcome is periodic with the pattern 
    \begin{align}
    (0,1,\ldots,s_1,s_1-1,\ldots, 1). \label{eq:fullsup}
    \end{align}
\end{theorem}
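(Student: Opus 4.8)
The plan is to guess the outcome function explicitly and verify it against the recurrence of Definition~\ref{def:outcome}, reading off the optimal action along the way. Let $f\colon\Znn\to\Znn$ be the function of period $2s_1$ whose values on one period are $(0,1,\ldots,s_1,s_1-1,\ldots,1)$; equivalently $f(x)$ is the distance from $x$ to the nearest multiple of $2s_1$, a $1$-Lipschitz ``tent wave'' oscillating between $0$ and $s_1$. I will prove by strong induction on $x$ that $o(x)=f(x)$ and simultaneously that greedy play is optimal, i.e.\ $\opt(x)=x$ for $1\le x<s_1$ and $\opt(x)=s_1$ for $x\ge s_1$.

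The inductive step substitutes $o(x-s)=f(x-s)$ into $o(x)=\max_{1\le s\le\min(x,s_1)}\{s-o(x-s)\}$, so everything reduces to showing that the maximum equals $f(x)$ and is attained at the greedy action. This in turn reduces to the single inequality
\begin{align}
 f(x)+f(x-s)\ge s \label{eq:plan_key}
\end{align}
for every admissible $s$ (which rearranges to $s-f(x-s)\le f(x)$), together with the fact that \eqref{eq:plan_key} holds with equality at the greedy $s$. For $x<s_1$ every admissible $s\le x$ lies in the rising part, so $f(x-s)=x-s$ and $s-f(x-s)=2s-x$ is strictly increasing in $s$; the maximum is thus attained uniquely at $s=x$, giving $o(x)=x$ and $\opt(x)=x$. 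For $x\ge s_1$ all of $s=1,\ldots,s_1$ are admissible and I establish \eqref{eq:plan_key} by a short case analysis: the interval $[x-s,x]$ has length $s\le s_1$, hence meets at most one ``critical point'' of $f$ (troughs and peaks alternate at spacing $s_1$), and one checks that crossing a trough gives $f(x)+f(x-s)=s$, crossing a peak gives $2s_1-s\ge s$, and a monotone stretch gives $2\min\{f(x),f(x-s)\}+s\ge s$. A direct modular computation shows that the greedy action $s=s_1$ always yields $s_1-f(x-s_1)=f(x)$, so the maximum $f(x)$ is attained there; since $s_1=\max S$, the opt-function (which selects the largest maximizing action) gives $\opt(x)=s_1$.

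Finally, convergence and periodicity follow immediately. The opt-function is constantly $s_1$ on $\{x\ge s_1\}$ while $\opt(s_1-1)=s_1-1\ne s_1$ (using $s_1=\max S\ge 2$, since $|S|\ge 2$), so by Definition~\ref{def:conv} the game converges at $\xi(S)=s_1$; periodicity of the outcome with the stated pattern then follows either directly from $o=f$ or from Lemma~\ref{lem:converges->periodic}. I expect the main obstacle to be the bookkeeping in inequality \eqref{eq:plan_key} — in particular pinning down that equality always \emph{includes} the greedy action, which is what identifies $\opt$ and not merely the value of $o$; the residue case analysis on $x\bmod 2s_1$ is the technical heart of the argument. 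The appeal to Lemma~\ref{lem:big_before} (non-increasing play) is not needed for this explicit verification, but it offers an alternative, more strategic route to optimality of greedy play.
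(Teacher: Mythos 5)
Your proof is correct, but it takes a genuinely different route from the paper's. The paper also argues by induction, but it inducts on the number of completed repetitions of the pattern: the base case $0\le x\le 2s_1$ is settled strategically (Positive takes everything, respectively takes $s_1$, ``since if she takes less than $s_1$ then Negative can take more than $x$''), and the inductive step is a short computation that applies the recurrence only through the greedy action, $o(y)=s_1-o(y-s_1)$, without re-checking that the other $s_1-1$ actions cannot do better at positions beyond $2s_1$. Your guess-and-verify argument closes exactly that gap: by identifying $o$ with the explicit tent wave $f$ and proving $f(x)+f(x-s)\ge s$ for every admissible $s$, with equality at $s=s_1$ via the reflection identity $f(x)+f(x-s_1)=s_1$, you verify the full recurrence of Definition~\ref{def:outcome} at every position, so optimality of greedy play is established rather than assumed in the inductive step; you also correctly track that $\opt$ selects the \emph{largest} maximizing action and that convergence at $\xi(S)=s_1$ uses $s_1\ge 2$. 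The price is the bookkeeping on troughs and peaks---and note one hairline imprecision there: when $s=s_1$ the interval $[x-s_1,x]$ can contain \emph{two} critical points (both endpoints, e.g.\ $x=2s_1$), though in that situation your trough and peak formulas both yield $f(x)+f(x-s)=s_1=s$, so the inequality is unaffected. In short, the paper's proof is shorter and strategic; yours is longer but self-contained and airtight on precisely the point the paper leaves implicit.
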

\begin{proof}
The proof is by induction.	For the base case, consider $0 \leq x \leq s_1$: when playing from position $x$, Positive takes all the pebbles, and thus $o(x) = x$.  When playing from positions $ x + s_1$, Positive's  optimal play is to take $s_1$ and negative takes the rest, thus $o(x+s_1) = s_1-o(x) = s_1 - x $.  It is Positive's optimal play since if she takes less than $s_1$ then Negative can take more than $x$.

Assume $k>0$ repetitions of the pattern (\ref{eq:fullsup}).  We study the next $s_1$ positions and show that the outcome in those positions will be exactly as in (\ref{eq:fullsup}).
\begin{align*}
o(x+2(k+1)s_1) &= s_1-o(x+2(k+1)s_1-s_1) \\ 
& = s_1 -o(x+2ks_1+s_1) \\
& = s_1 -o(x+s_1) \\
& = s_1 - (s_1 - x) \\
& = x
\end{align*}
For the following $s_1$ positions the outcome is 
\begin{align*}
o(x+ s_1 + 2(k+1)s_1) &= s_1-o(x+s_1+2(k+1)s_1-s_1) \\
& = s_1-o(x+2(k+1)s_1) \\
& = s_1-x
\end{align*}
\end{proof}

\section{CS with two actions}\label{sec:two_action}
In a game of just two possible actions,  $S = \{s_2, s_1\}$, with $s_1 > s_2$ we characterize the set of positions where it is optimal to sacrifice,  and this set will be called $X^*$ (see Definition~\ref{def:X^*} and Theorem~\ref{thm:2actions}). 

\begin{notation}
Let $\alpha = s_1-s_2$.
\end{notation}
We think of $\alpha$ as the size of the sacrifice a player makes by taking just $s_2$ instead of the greedy action $s_1$.

\begin{definition}\label{def:X^*}
Let $\Delta = \{0,1,\ldots , \alpha-1 \}$. For each $i \in \Z_{>0}$, such that
	\begin{align}\label{eq:sacrifice}
		is_2 > (i-1)s_1,
	\end{align}
	let
	\begin{align}\label{eq:x star}
	X^*(i) = \{is_2+(i-1)s_1 + \delta \mid \delta \in \Delta \}, 
	\end{align}
and otherwise $X^*(i)=\varnothing$.  Let 
 $$X^*= \bigcup_{i \in \Z_{>0}} X^*(i)$$
\end{definition}

Inequality (\ref{eq:sacrifice}) means that $i$ sacrifices is worth more than $(i-1)$ greedy actions.  

A simple observation is that no player can benefit by playing more than $\left\lfloor\frac{s_1}{\alpha}\right\rfloor$ sacrifices. 
\begin{lemma}\label{lem:sacrbound}
    No player benefits by playing more than
    $i_{\max}= \left\lfloor\frac{s_1}{\alpha}\right\rfloor = 1+\left\lfloor\frac{s_2}{\alpha}\right\rfloor$ 
    sacrifices. 
\end{lemma}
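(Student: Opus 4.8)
The plan is to reduce the whole question to a single bookkeeping identity for the result and then read off the bound from the value range of Lemma~\ref{lem:bounded_outcome}. First I would dispose of the stated equality, which needs no game theory: since $s_1 = s_2 + \alpha$ we have $s_1/\alpha = s_2/\alpha + 1$, so $\left\lfloor\frac{s_1}{\alpha}\right\rfloor = 1 + \left\lfloor\frac{s_2}{\alpha}\right\rfloor$ at once.

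The core tool is a counting identity. Suppose a play of the game uses $m_P,m_N$ moves and $i_P,i_N$ sacrifices by Positive and Negative. Each greedy move removes $s_1$ and each sacrifice removes $s_2 = s_1-\alpha$, so Positive contributes $m_P s_1 - i_P\alpha$ to the score and Negative contributes $m_N s_1 - i_N\alpha$, whence
\begin{align}
\text{result} = (m_P - m_N)s_1 - (i_P - i_N)\alpha. \label{eq:identity}
\end{align}
Because Positive starts, $m_P - m_N \in \{0,1\}$, so the first term of \eqref{eq:identity} is either $0$ or $s_1$. Combining this with Lemma~\ref{lem:bounded_outcome}, which gives $\text{result}\in[0,s_1]$ in optimal play, the case $m_P-m_N=0$ yields $0\le (i_N-i_P)\alpha\le s_1$ and the case $m_P-m_N=1$ yields $0\le (i_P-i_N)\alpha\le s_1$; either way $|i_P-i_N|\le s_1/\alpha$, and since the left side is an integer, $|i_P-i_N|\le \left\lfloor\frac{s_1}{\alpha}\right\rfloor = i_{\max}$. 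This already caps the \emph{net} number of sacrifices.

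To upgrade this to a bound on each player's \emph{own} sacrifices I would invoke a move-wise domination coming from \eqref{eq:sacrifice}. For every $i\ge i_{\max}+1$ one has $i-1\ge s_2/\alpha$, hence $is_2\le (i-1)s_1$: playing $i$ sacrifices never removes more of a player's own pebbles than playing $i-1$ greedy moves, while costing an extra turn. Concretely, I would fix the sacrificing player to be Positive (Negative is symmetric) and have the opponent answer with pure greedy play. In \eqref{eq:identity} Positive's own sacrifices enter only with the coefficient $-\alpha$, so the sole way they can raise the result is by flipping the parity term $m_P-m_N$ from $0$ to $1$, a gain of at most $s_1$. But $i_P\ge i_{\max}+1$ sacrifices cost $i_P\alpha \ge \big(\left\lfloor\frac{s_1}{\alpha}\right\rfloor+1\big)\alpha > s_1$, so this last sacrifice cannot pay for itself; the strategy is therefore weakly dominated by one with fewer sacrifices, establishing the lemma.

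The main obstacle I expect is the endgame accounting. When the opponent plays greedily he may still be forced to take $s_2$ once, upon first facing a heap in $[s_2,s_1)$, which contributes to $i_N$ and loosens the naive bound by one. I would handle this by using Lemma~\ref{lem:big_before} to place sacrifices last, and arguing that an over-sacrificing Positive is precisely the player pushed into the endgame, so that the relevant line has $i_N=0$. The second delicate point is the exact meaning of ``benefits'': the comparison must be against the player's \emph{best} alternative, not merely against pure greedy, which is why the move-wise inequality $is_2\le(i-1)s_1$—valid independently of the opponent's play—is the right engine rather than the crude cost/benefit estimate alone.
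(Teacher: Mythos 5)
Your core engine is the same as the paper's: play the over-sacrificing player against a pure greedy opponent, observe that each sacrifice costs $\alpha$ while the only conceivable gain is the parity term worth at most $s_1$, and invoke Lemma~\ref{lem:bounded_outcome} to conclude that more than $\left\lfloor s_1/\alpha\right\rfloor$ sacrifices cannot be optimal. The paper compresses this into one line (``if $is_2<(i-1)s_1$ and Negative answers greedily, the result is negative, contradicting $o(x)\ge 0$''), and your counting identity $(m_P-m_N)s_1-(i_P-i_N)\alpha$ is a cleaner formalization of the same implicit pairing; the floor-equality is handled identically in both.

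The gap is in the one step where you go beyond the paper, namely your proposed repair of the endgame accounting. The claim that ``an over-sacrificing Positive is precisely the player pushed into the endgame, so that the relevant line has $i_N=0$'' is false, and Lemma~\ref{lem:big_before} cannot rescue it, since that lemma only permutes Positive's own actions and has no influence on who enters the endgame. Concretely, take $S=\{5,7\}$ (so $\alpha=2$, $i_{\max}=3$) and heap $58$: Positive plays five sacrifices against greedy Negative, the heap evolves $58,53,46,41,34,29,22,17,10,5$, Positive is the one who enters the endgame, and greedy Negative then takes $s_2=5$ --- so $i_N=1$ on exactly the line where your argument needs $i_N=0$, and your crude estimate only yields $\mathrm{result}\le s_1-(i_P-1)\alpha$, which is not negative at $i_P=i_{\max}+1$. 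The correct repair is parity, not position: whenever greedy Negative takes $s_2$ he is moving from a heap in $[s_2,s_1)$, and since all endgame moves are $s_2$-moves and alternate, Negative then makes the \emph{last} move of the game; hence $m_P-m_N=0$, the parity term vanishes, and your identity gives $\mathrm{result}=-(i_P+e_P-1)\alpha<0$ already for $i_P\ge 2$. Combining this with your own computation in the complementary case ($i_N=0$, where $\mathrm{result}\le s_1-i_P\alpha<0$ for $i_P\ge i_{\max}+1$) closes the proof. To be fair, the paper's published proof silently elides exactly this case as well, so your instinct that it requires an argument was sound; it is only the argument you chose that fails.
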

\begin{proof}
Suppose that $i\in \Zp$ counts the number of sacrifices by Positive, and assume $is_2 < (i-1)s_1$, where Negative plays $i-1$ greedy actions. Then  $o(x)<0$, which is impossible by Lemma~\ref{lem:bounded_outcome}. Hence, for any optimal strategy we must have $is_2 \ge (i-1)s_1$. Therefore, $s_1\ge i\alpha$, which implies the lemma since $i$ is an integer. Moreover, observe that $\frac{s_1}{\alpha}=\frac{s_1 + s_2 - s_2}{\alpha} = 1+\frac{s_2}{\alpha}$.
\end{proof}

Note that $i_{\max}$ is the largest $i$ such that (\ref{eq:sacrifice}) holds.  E.g., in Example~ \ref{ex:game 5,7}, $i_{\max} = 3$. We will see that $i_{\max}-1$ is the maximum number of sacrifices a player can beneficially make, to win an extra turn.  In Example~\ref{ex:game 5,7}, 2 sacrifices are still beneficial since $3\cdot5 > 2\cdot7$ however 3 sacrifices are not since $4\cdot5 \ngtr 3\cdot7$.

\begin{example}\label{ex:game 5,7}
    Consider the game $S=\{5,7\}$.  The only positions where playing $s_2=5$ is strictly better than playing $s_1=7$ are $X^*=\{5,6,17,18,29,30\}$.  Table~\ref{table:(5,7)} presents the optimal actions and outcomes for the first 55 positions of $S=\{5,7\}$.
\end{example}

\begin{center}
\begin{table}[ht!]
\begin{tabular}{|c|| c c c c c c c|c c c c c c c|} 
\hline
$x$			&0 &1 &2 &3 &4 &5 &6   &7 &8 &9 &10&11&12&13 \\ 
\hline
$\opt(x) $		&- &- &- &- &- &\color{red}5 &\color{red}5   &7 &7 &7 &7 &7 &7 &7 \\ 
\hline
$o(x)$		&0 &0 &0 &0 &0 &5 &5   &7 &7 &7 &7 &7 &2 &2 \\
\hline\hline
$x$			&14&15&16&17&18&19&20  &21&22&23&24&25&26&27 \\
\hline
$\opt $		&7 &7 &7 &\color{red}5 &\color{red}5 &7 &7   &7 &7 &7 &7 &7 &7 &7 \\ 
\hline
$o(x)$		&0 &0 &0 &3 &3 &5 &5   &7 &7 &7 &4 &4 &2 &2 \\
\hline\hline
$x$			&28&29&30&31&32&33&34  &35&36&37&38&39&40&41 \\
\hline
$\opt(x) $		&7 &\color{red}5 &\color{red}5 &7 &7 &7 &7   &7 &7 &7 &7 &7 &7 &7 \\ 
\hline
$o(x)$		&0 &1 &1 &3 &3 &5 &5   &7 &6 &6 &4 &4 &2 &2 \\
\hline\hline
$x$			&42&43&44&45&46&47&48  &49&50&51&52&53&54&55 \\ 
\hline
$\opt(x) $		&7 &7 &7 &7 &7 &7 &7   &7 &7 &7 &7 &7 &7 &7 \\ 
\hline
$o(x)$		&0 &1 &1 &3 &3 &5 &5   &7 &6 &6 &4 &4 &2 &2 \\
\hline
\end{tabular}
\caption{Positive's (largest) optimal action, and the outcome for CS with action set $S=\{5,7\}$, starting from position $x$.}
\label{table:(5,7)}
\end{table}
\end{center}

Next, we develop a tool, Positive's `complementary strategy', which gives a lower bound on the result, and it equals the outcome if Negative plays optimally (see Lemma~\ref{lem:complementary}). 

\begin{definition} [Complementary strategy]
 Positive plays on the first action $s_2$.  From now on Positive's actions complement Negative's actions modulo $(s_1+s_2)$, that is, if Negative's action is $s_1$ then Positive's action is $s_2$ and vice versa.
 \end{definition}
 
\begin{lemma}\label{lem:complementary}
From position $x\in X^*(i)$ Positive's optimal play is the complementary strategy and Negative's optimal play is the greedy strategy.  The outcome is 
 \begin{align}\label{eq:>}
     o(x) = is_2 - (i-1)s_1 > 0.       
 \end{align}
\end{lemma}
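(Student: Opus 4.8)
The plan is to determine the value $o(x)$ by proving two matching one-sided bounds, one for each player: the \emph{complementary strategy} will witness that Positive can force a result of at least $is_2-(i-1)s_1$, and the \emph{greedy strategy} will witness that Negative can hold the result down to at most $is_2-(i-1)s_1$. Since CS is zero-sum, the two bounds coincide, which simultaneously pins down $o(x)$ and certifies that the two named strategies are optimal. Throughout I write $x=is_2+(i-1)s_1+\delta$ with $\delta\in\Delta$, and I keep in mind that $i\le i_{\max}$ by Lemma~\ref{lem:sacrbound}.

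For the lower bound I would run Positive's complementary strategy against an \emph{arbitrary} Negative play. After Positive's opening move $s_2$ the heap equals $(i-1)(s_1+s_2)+\delta$, and from there each Negative move paired with Positive's complementing response removes exactly $s_1+s_2$ pebbles, contributing $-\alpha$ to the score if Negative played $s_1$ and $+\alpha$ if Negative played $s_2$. Two bookkeeping facts drive the argument: (i) Positive can always complete a complementing move, since at the start of each of the first $i-1$ pairs the heap is at least $s_1+s_2$; and (ii) play terminates after exactly $i-1$ such pairs, independently of Negative's choices, because the residual heap is then $\delta<\alpha$ (terminal whenever $\alpha\le\min S$, and otherwise settled by inspecting the short residual endgame). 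Hence the result equals $s_2+\alpha(n_2-n_1)$ with $n_1+n_2=i-1$, which is minimized when Negative always plays $s_1$, giving $s_2-(i-1)\alpha=is_2-(i-1)s_1$. Thus the complementary strategy guarantees $o(x)\ge is_2-(i-1)s_1$, with equality precisely against greedy Negative; the defining inequality $is_2>(i-1)s_1$ of Definition~\ref{def:X^*} makes this value strictly positive, and Lemma~\ref{lem:bounded_outcome} keeps everything in range.

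For the upper bound I would show that greedy Negative caps the result at $is_2-(i-1)s_1$, by induction on $i$. The base case $i=1$ is immediate: $x<s_1$ forces Positive to play $s_2$ into a terminal heap $\delta$, so $o(x)=s_2$. For the step, if Positive opens with the sacrifice $s_2$, then greedy Negative answers $s_1$, the heap drops to $(i-1)s_2+(i-2)s_1+\delta\in X^*(i-1)$, and the running score is $-\alpha$; the induction hypothesis bounds the remainder by $(i-1)s_2-(i-2)s_1$, and $-\alpha+(i-1)s_2-(i-2)s_1=is_2-(i-1)s_1$, exactly as needed.

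The delicate case, which I expect to be the main obstacle, is when Positive instead opens greedily with $s_1$: greedy Negative replies $s_1$, and the resulting heap $is_2+(i-3)s_1+\delta$ lands just \emph{below} the block $X^*(i-1)$ rather than inside it, so the induction does not close directly on $X^*$-positions alone. I would resolve this either by a global counting argument, lower-bounding Negative's total removal $N$ from the fact that greedy Negative removes $s_1$ at every non-endgame turn together with the estimate $x>2(i-1)s_1$ forced by $is_2>(i-1)s_1$ (turning result $=(x-r)-2N$ into the desired inequality), or by strengthening the inductive hypothesis to control the outcome on the positions immediately preceding each $X^*$-block; here Lemma~\ref{lem:big_before}, which lets us assume Positive plays non-increasing actions, should simplify the case analysis. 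Combining the two bounds yields $o(x)=is_2-(i-1)s_1$ and the asserted optimality of the complementary and greedy strategies.
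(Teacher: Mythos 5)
Your framing (complementary strategy as Positive's witness for a lower bound, greedy as Negative's witness for a matching upper bound) is the same as the paper's, and your lower-bound paragraph is essentially the paper's argument carried out with more explicit bookkeeping; that half is fine. The genuine gap is the upper bound, and you have located it yourself: the case where Positive opens greedily. Your induction on $i$ does not close there, and neither of your two proposed repairs is actually carried out. Worse, the second repair (strengthening the inductive hypothesis to control the positions just below each $X^*$-block) is circular as the paper is organized: those positions are exactly the subject of Lemma~\ref{lem:duality}, whose proof \emph{uses} the present lemma. The first repair, as sketched, also does not go through uniformly: from result $=(x-r)-2N$ you would need $N\ge (i-1)s_1+(\delta-r)/2$ for \emph{every} Positive line, and the residual $r$ is not under your control; a single inequality valid for all Positive strategies is not the right target.

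What closes the gap in the paper is a dichotomy plus a contradiction with the lower bound you have already established, and it needs no induction at all. By Lemma~\ref{lem:big_before} one may assume Positive plays non-increasing actions, so either Positive plays only sacrifices (against greedy Negative this \emph{is} the complementary line, and it yields exactly $is_2-(i-1)s_1$), or Positive's \emph{first} action is $s_1$. In the latter case, against greedy Negative the pebbles left for Positive after Negative's $i-1$ removals of $s_1$ satisfy $x-s_1-(i-1)s_1=is_2-s_1+\delta<(i-1)s_2$, so Positive plays at most $i-1$ actions in total, each of size at most $s_1$, while Negative plays $i-1$ actions of size $s_1$ and moves last; hence the result is at most $0$, strictly worse than the guaranteed $is_2-(i-1)s_1>0$. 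So no optimal Positive line contains a greedy action, which pins down $o(x)$ and both optimality claims at once. One further caution: your parenthetical in the lower bound, that the case $\alpha>\min S$ is ``settled by inspecting the short residual endgame,'' is not innocuous --- when $2s_2\le s_1$ the statement itself can fail (for $S=\{2,7\}$ and $x=4\in X^*(1)$ one computes $o(4)=0\ne s_2$). The lemma is really only valid, and in Theorem~\ref{thm:2actions} only invoked, under the standing hypothesis $2s_2>s_1$, which makes the residual heap $\delta<\alpha<s_2$ terminal; your proof (and the paper's) should state that hypothesis explicitly.
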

\begin{proof}
If Positive plays the complementary strategy, this produces at least the result in (\ref{eq:>}). Moreover, Positive gets $i$ turns by the complementary-strategy vs. Negative's $(i-1)$ turns. 
By definition of $ X^*(i)$, we let $x=is_2+(i-1)s_1+\delta$, for some  $ \delta \in \{0,...,\alpha-1\} $.  Suppose that Positive deviates from the complementary strategy and plays at least one greedy action, then Negative can play only greedy actions and play the last turn.  By estimating the number of remaining pebbles for Positive,   $x-s_1-(i-1)s_1 = is_2-s_1 + \delta < (i-1)s_2$, Positive can play at most $i-1$ actions. If this were optimal play, the outcome would be at most 0, which contradicts (\ref{eq:>}).  Suppose that Negative deviates from the greedy strategy, then Positive still plays the Complementary-strategy and gets an extra $\alpha$ for each deviation of Negative.
\end{proof}

Equivalently, for $x\in X^*(i)$,  
\begin{align}\label{eq:outx*}
o(x) = s_1-i\alpha, 
\end{align}
and a consequence of this is Lemma~\ref{lem:duality}. Look at Table \ref{table:(5,7)}. For any position in $X^*(i)$, the outcome equals the number of consecutive positions with outcome $0$ immediately to the left of $X^*(i)$. For example  $o(17)=3$, and the relevant positions are 14, 15, 16. This holds for any $S=\{s_2,s_1\}$. The outcome is $0$ in those positions since both players will play $s_1$ until the game ends, and they will have equal numbers of turns.

\begin{lemma} \label{lem:duality}
    Suppose that $x= \min\{ X^*(i)\}$, for any $1\le i\le i_{\max}$, and $y$ is such that $x-o(x)\le y<x$. Then 
    \begin{align}\label{eq:duality}
    o(y) = 0,
    \end{align}
    and the optimal action is $\opt(y)=s_1$. \\
\end{lemma}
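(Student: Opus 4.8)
The plan is to prove both conclusions of Lemma~\ref{lem:duality} simultaneously by induction on $i$, driving everything from the recursive Definition~\ref{def:outcome} together with the global bounds $0\le o\le s_1$ of Lemma~\ref{lem:bounded_outcome}.

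First I would put the range of positions into closed form. Using the consequence $o(x)=s_1-i\alpha$ from (\ref{eq:outx*}) and $x=\min X^*(i)=is_2+(i-1)s_1$, a one-line computation gives $x-o(x)=2(i-1)s_1$. So the positions in question form the block $Z_i:=\{\,y: 2(i-1)s_1\le y< is_2+(i-1)s_1\,\}$, i.e.\ the $o(x)$ positions sitting immediately to the left of $X^*(i)$, and the goal reduces to showing $o\equiv 0$ and $\opt\equiv s_1$ on $Z_i$.

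The engine of the induction is a single local fact: if $o(z)=0$ then $o(z+s_1)=s_1$. Indeed, by Definition~\ref{def:outcome}, $o(z+s_1)=\max\{\,s_1-o(z),\ s_2-o(z+\alpha)\,\}$; the first term is $s_1$, and the second is at most $s_2<s_1$ since $o\ge 0$ by Lemma~\ref{lem:bounded_outcome}. Hence a block of zeros on $Z_{i-1}$ (the inductive hypothesis) forces a block of maxima on $M_{i-1}:=Z_{i-1}+s_1=[(2i-3)s_1,\ (i-1)(s_1+s_2))$, on which $o\equiv s_1$. For the inductive step I would then verify, by elementary interval arithmetic, that for every $y\in Z_i$ both shifted positions land inside $M_{i-1}$: $y-s_1$ sweeps out the lower portion and $y-s_2$ the upper portion of $M_{i-1}$, each containment reducing to the single inequality $s_2\le s_1$. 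Granting this, Definition~\ref{def:outcome} evaluates directly: $o(y)=\max\{\,s_1-o(y-s_1),\ s_2-o(y-s_2)\,\}=\max\{\,s_1-s_1,\ s_2-s_1\,\}=\max\{0,-\alpha\}=0$, and since the larger (and maximizing) action is $s_1$, also $\opt(y)=s_1$. The base case $i=1$ is immediate, because there $x-o(x)=0$, so $Z_1=\{0,\dots,s_2-1\}$ consists of terminal positions, all of outcome $0$ (the $\opt$ claim being vacuous there).

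I expect the main obstacle to be purely the bookkeeping in the containment step: one must track the endpoints of the three interleaved blocks $Z_{i-1}$, $M_{i-1}$, $Z_i$ with care and confirm that \emph{all} of $M_{i-1}$ (not merely part of it) carries outcome $s_1$, so that both branches of the maximum are evaluated exactly rather than merely bounded. Everything else—the local recursion identity and the final evaluation—is routine once the intervals are lined up.
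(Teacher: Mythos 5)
Your proof is correct, and it takes a genuinely different route from the paper's. The paper reasons about play sequences: after rewriting the hypothesis as $2(i-1)s_1\le y<is_2+(i-1)s_1$ (via Lemma~\ref{lem:complementary}, exactly as you do), it argues that an opening sacrifice by Positive lets Negative answer with $i-1$ greedy moves while Positive gets at most $i-1$ moves in total, forcing a negative result against Lemma~\ref{lem:bounded_outcome}, and that mutual greedy play gives $0$. You never argue about strategies at all: you unroll the recursion of Definition~\ref{def:outcome} by induction on $i$, through the local implication that $o(z)=0$ forces $o(z+s_1)=s_1$, plus the containments $Z_i-s_1\subseteq M_{i-1}$ and $Z_i-s_2\subseteq M_{i-1}$; I checked both containments and they do reduce to $s_2\le s_1$ as you claim. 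Your route costs extra interval bookkeeping but is tighter where the paper is informal: the paper's ``Positive cannot do better'' explicitly excludes only a first-move sacrifice, while your exact evaluation of both branches of the maximum handles all continuations at once; moreover you prove strictly more, since along the way you establish $o\equiv s_1$ on the blocks $M_j=[(2j-1)s_1,\,j(s_1+s_2))$, which is essentially the content of the Proposition that immediately follows this lemma in the paper (and your open right endpoint is in fact the correct one there, since e.g.\ $o(s_1+s_2)=\alpha\ne s_1$). Two points you leave implicit and should state: (a) the inductive step at $i$ invokes the lemma at $i-1$, which is a legitimate nonempty instance because inequality~(\ref{eq:sacrifice}) is inherited downward in $i$, so $X^*(i)\ne\varnothing$ implies $X^*(i-1)\ne\varnothing$; and (b) for $i\ge 2$, every $y\in Z_i$ satisfies $y\ge 2s_1$, so both actions are available and the two-branch form of the recursion applies. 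Neither is a gap, only bookkeeping.
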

\begin{proof}
By Lemma~\ref{lem:complementary}, since $o(x)=is_2 - (i-1)s_1$, we get $$2(i-1)s_1\le y<is_2+(i-1)s_1,$$ where the upper bound is by $x= \min\{ X^*(i)\}$. If Positive starts by playing $s_2$, then by the lower bound, Negative can play $i-1$ turns of $s_1$, whereas by the upper bound, Positive can play at most $(i-1)$ actions in total. Hence the result is negative, which is not optimal, by Lemma~\ref{lem:bounded_outcome}. 

On the other hand, by the lower bound if both players play greedily, the result is 0. This is the outcome, because Positive cannot do better, and by Lemma~\ref{lem:bounded_outcome}, neither can Negative. 
\end{proof} 
\begin{proposition}
If $(2i-1)s_1\le y\le i(s_1+s_2)$, for any $1\le i\le i_{\max}$, then $o(y)=s_1$, and $\opt(y)=s_1$.
\end{proposition}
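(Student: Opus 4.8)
The plan is to reduce the claim to a statement about positions with outcome $0$ and then invoke Lemma~\ref{lem:duality}. The key elementary observation is that in a two-action game, $o(y)=s_1$ holds if and only if $o(y-s_1)=0$. Indeed, by Definition~\ref{def:outcome}, $o(y)=\max\{s_2-o(y-s_2),\, s_1-o(y-s_1)\}$; the first branch is at most $s_2<s_1$ because outcomes are nonnegative (Lemma~\ref{lem:bounded_outcome}), while the second branch equals $s_1$ exactly when $o(y-s_1)=0$. Since $o(y)\le s_1$ always (again Lemma~\ref{lem:bounded_outcome}), this forces $o(y)=s_1$ precisely when $o(y-s_1)=0$, in which case the unique maximizing action is $s_1$, giving $\opt(y)=s_1$. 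So the first step I would carry out is to establish this equivalence.

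It then remains to show that $o(y-s_1)=0$ for every $y$ in the stated range. Writing $x=\min X^*(i)=is_2+(i-1)s_1$, I would use the value $o(x)=s_1-i\alpha$ from \eqref{eq:outx*} (equivalently $o(x)=is_2-(i-1)s_1$) to compute $x-o(x)=2(i-1)s_1$. Subtracting $s_1$ from the hypothesis $(2i-1)s_1\le y$ maps the range exactly onto the interval $[\,x-o(x),\,x\,)$, that is, $2(i-1)s_1\le y-s_1<x$. Lemma~\ref{lem:duality}, which applies precisely because $1\le i\le i_{\max}$ guarantees $o(x)>0$, then yields $o(y-s_1)=0$, and combined with the equivalence above this completes the argument.

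The only delicate point, which I expect to be the main thing to check carefully rather than a genuine obstacle, is the upper endpoint. The interval on which Lemma~\ref{lem:duality} asserts $o=0$ is $[2(i-1)s_1,\,\min X^*(i)-1]$, so after the shift the outcome equals $s_1$ exactly on $[(2i-1)s_1,\,i(s_1+s_2)-1]$. At $y=i(s_1+s_2)$ one instead has $y-s_1=\min X^*(i)$, where $o=s_1-i\alpha>0$, so $o(y)=i\alpha$ rather than $s_1$ (the table for $S=\{5,7\}$ confirms this with $o(12)=2$, for $i=1$); the clean conclusion therefore requires the strict bound $y<i(s_1+s_2)$. Apart from tracking this boundary, the proof is routine: its substance is the reduction $o(y)=s_1\iff o(y-s_1)=0$ together with the arithmetic identity $x-o(x)=2(i-1)s_1$, after which Lemma~\ref{lem:duality} does the remaining work.
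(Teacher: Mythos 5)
Your argument is correct and, once unpacked, coincides with the paper's own proof: the paper's one-line justification (combine Lemma~\ref{lem:duality} with Lemma~\ref{lem:bounded_outcome}) is precisely your equivalence $o(y)=s_1 \iff o(y-s_1)=0$, with Lemma~\ref{lem:bounded_outcome} capping outcomes at $s_1$, followed by the shift $y\mapsto y-s_1$ onto the interval $[\,2(i-1)s_1,\,\min X^*(i)\,)$ on which Lemma~\ref{lem:duality} gives outcome $0$. Your endpoint observation is also correct and exposes a genuine off-by-one error in the statement itself: at $y=i(s_1+s_2)$ one has $y-s_1=\min X^*(i)$ with $o(y-s_1)=s_1-i\alpha>0$, hence $o(y)<s_1$, and the paper's own Table~\ref{table:(5,7)} confirms this ($o(12)=2\neq 7$ for $S=\{5,7\}$, $i=1$). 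So the proposition holds only on the half-open range $(2i-1)s_1\le y<i(s_1+s_2)$, which is exactly what your proof establishes; the paper's terse proof glosses over this boundary case.
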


\begin{proof}
Combine Lemma~\ref{lem:duality} with Lemma~\ref{lem:bounded_outcome}, to see that $\opt(y)=s_1$, which implies $o(y)=s_1$, by Lemma~\ref{lem:duality}. 
\end{proof}
The following result is used in the second part of the proof of Theorem~\ref{thm:2actions}. 
\begin{lemma}\label{lem:bounded diff}
Consider $S = \{s_2,s_1\}$ and $\alpha = s_1 - s_2$. If $x\not\in X^*,$ and $x \ge\alpha$ then 
\begin{align}\label{eq:alpha} 
    o(x) - o(x-\alpha) \le \alpha.
\end{align}
\end{lemma}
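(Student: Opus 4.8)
The plan is to read everything off the outcome recurrence in Definition~\ref{def:outcome}, which for $S=\{s_2,s_1\}$ reads $o(x)=\max\{\,s_2-o(x-s_2),\,s_1-o(x-s_1)\,\}$ once $x\ge s_1$, reduces to $o(x)=s_2-o(x-s_2)$ for $s_2\le x<s_1$, and equals $0$ for $x<s_2$. First I would dispose of the positions below $s_1$. Since $X^*(1)=\{s_2,s_2+1,\dots,s_1-1\}$ by Definition~\ref{def:X^*}, every position with $s_2\le x<s_1$ lies in $X^*$; hence a position $x\notin X^*$ with $x<s_1$ must be terminal, i.e. $x<s_2$, so $o(x)=0$, and since $o(x-\alpha)\ge 0$ by Lemma~\ref{lem:bounded_outcome}, the claimed inequality $o(x)-o(x-\alpha)\le\alpha$ holds trivially. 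From now on I assume $x\ge s_1$.

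The decisive ingredient is a one-sided bound valid at \emph{every} $x\ge s_1$, irrespective of $X^*$. Because $x\ge s_1$ gives $x-\alpha\ge s_2$, the action $s_2$ is legal from position $x-\alpha$ and lands on $x-\alpha-s_2=x-s_1$. Thus Definition~\ref{def:outcome} yields $o(x-\alpha)\ge s_2-o(x-s_1)$, which rearranges to $s_1-o(x-s_1)\le o(x-\alpha)+\alpha$. In words, the value Positive obtains by the \emph{greedy} first action at $x$ is always within $\alpha$ of $o(x-\alpha)$. Consequently, whenever $o(x)$ is realized by the greedy action, that is $o(x)=s_1-o(x-s_1)$, the lemma follows at once.

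The only remaining case is that the sacrifice strictly beats greedy at $x$, i.e. $o(x)=s_2-o(x-s_2)>s_1-o(x-s_1)$, and here the plan is to contradict $x\notin X^*$ by showing that a strictly profitable first sacrifice forces $x\in X^*$. By Lemma~\ref{lem:big_before} I may fix a non-increasing optimal play; since the first action is then forced to be $s_2$ and the play is non-increasing, Positive's optimal play consists \emph{only} of $s_2$-moves. I would then run the counting underlying Lemma~\ref{lem:complementary}: if Positive takes $i$ turns of $s_2$ against Negative's greedy responses, then $i$ is bounded as in Lemma~\ref{lem:sacrbound}, Positive gets exactly one extra turn, and comparing the pebbles consumed against the terminal condition confines $x$ to the window $[\,is_2+(i-1)s_1,\;is_2+(i-1)s_1+\alpha\,)$ with $is_2>(i-1)s_1$; this is precisely $x\in X^*(i)\subseteq X^*$, the desired contradiction. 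Hence outside $X^*$ the sacrifice cannot strictly win, so $o(x)$ is realized by the greedy action and the bound of the previous paragraph applies.

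I expect this last step — pinning the all-$s_2$ optimal play down to the exact residue window — to be the main obstacle, since it effectively re-derives the converse of Lemma~\ref{lem:complementary} rather than quoting it; the turn-parity bookkeeping (who plays the last move, and ruling out a profitable deviation by Negative from greedy) must be handled with the same care as in the proof of Lemma~\ref{lem:complementary}. If instead the characterization ``$\opt(x)=s_1$ for $x\notin X^*$'' is already available at this point, the lemma collapses to the one-line estimate $o(x)-o(x-\alpha)\le\bigl(s_1-o(x-s_1)\bigr)-\bigl(s_2-o(x-s_1)\bigr)=\alpha$ obtained from the greedy bound above.
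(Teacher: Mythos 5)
Your two reductions are sound, and even tidier than the paper's own case analysis: positions $x<s_1$ outside $X^*$ are terminal since $X^*(1)=\{s_2,\dots,s_1-1\}$, and for $x\ge s_1$ the action $s_2$ is legal from $x-\alpha$, giving $o(x-\alpha)\ge s_2-o(x-s_1)$, hence $s_1-o(x-s_1)\le \alpha+o(x-\alpha)$; so the lemma holds at any $x$ where the greedy action realizes $o(x)$. The gap is in your last case, and it is genuine: the pebble count does not pin $x$ to a window of width $\alpha$. If Positive plays only $s_2$ and Negative answers greedily, with Positive getting $i$ turns to Negative's $i-1$, the terminal condition yields only $is_2+(i-1)s_1\le x$ and $x-is_2-(i-1)s_1<s_2$, i.e.\ a window of width $s_2$, whereas $X^*(i)$ has width $\alpha=s_1-s_2$, strictly smaller in the only nontrivial case $2s_2>s_1$. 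Concretely, take $S=\{5,7\}$ and $x=19$: all-$s_2$ against greedy gives Positive $i=2$ turns against $1$, ends with $2<s_2$ pebbles, has outcome $2\cdot 5-7=3>0$, and $i\le i_{\max}$ --- every condition your counting extracts is satisfied --- yet $19\notin X^*$ and $o(19)=5$, attained by greedy. So the inference ``these counting conditions imply $x\in X^*$'' is false; to exclude $\delta\in\{\alpha,\dots,s_2-1\}$ you must show that greedy does at least as well as all-$s_2$ at those positions, and that is a recursive comparison with $o(x-s_1)$, not a count of pebbles at the terminal position.

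That missing comparison is exactly the hard content of the reverse direction of Theorem~\ref{thm:2actions}, and this is where your plan inverts the paper's logical order. The paper proves Lemma~\ref{lem:bounded diff} first, by a self-contained induction: it sets $\eta(x)=\alpha+o(x-\alpha)-o(x)$, couples the game at $x$ (Negative to move, value $-o(x)$) with the game at $x-\alpha$ (Positive to move), answers each possible first move of Negative with an explicit reply, and in the only hard branch reduces to $\eta(x)\ge\eta(x-s_1-s_2)$, closing by induction --- no knowledge of where sacrificing is optimal is needed. Only afterwards is the lemma used to prove the characterization ``$\opt(x)=s_2$ only if $x\in X^*$''. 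Consequently you cannot fill your gap by citing that characterization (circular), and filling it honestly amounts to redoing the theorem's induction inside the lemma, which your sketch does not do. Your greedy-bound observation in the second paragraph is worth keeping --- it shows the lemma is a one-line consequence wherever greedy is known to be optimal --- but as it stands the proposal does not prove the lemma.
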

    \begin{proof}
	We study the function 
	$$\eta(x) := \alpha + o(x - \alpha) - o(x), $$
    and  show that $\eta(x)\ge 0$, if $x\not\in X^*$ and $x\ge \alpha$.
    We think about $o(x)$ as the outcome when Positive starts, and $-o(x)$ as the outcome when Negative starts. It suffices to show that, for all plays by Negative from $x$, there is a response by Positive such that the inequality (\ref{eq:alpha}) holds.\\

    \noindent Case 1: If there is no move from $x$ (because $s_2>x$) then $\eta(x)=\alpha\ge 0$. \\
    
    \noindent Case 2: If there is a move from $x$, but no move from $ x-\alpha$, then $x < s_1$; thus $x\in X^*(1)$, and the only possible action is $s_2$, then $\eta(x)=\alpha-o(x)=\alpha-s_2 > 0$ since $2s_2 > s_1$. \\
    
    \noindent Case 3: If there is a move from $x$, and a move from $ x-\alpha$, then
  	\begin{enumerate}
		\item If Negative plays optimally $s_1$ from $x$, and Positive plays $s_2$ from $x-\alpha$, we get
		\begin{align*}
        \eta(x) &\ge \alpha + o(x-s_1) -s_1- o(x-\alpha-s_2) +s_2 \\ &= o(x-s_1) - o(x-s_1)\\ &= 0
        \end{align*}
        
		\item If Negative plays optimally $s_2$ from $x$, and Positive plays $s_2$ from $x-\alpha$, we get
		$$\eta(x)\ge \alpha + o(x-s_2) - o(x-\alpha-s_2), $$
        and we note that, if Negative has no move from $x-\alpha-s_2 = x-s_1$, then this implies $\eta(x)\ge 0$.  Assume Negative has a move then there are two cases:
		\begin{enumerate}[2.1]
			\item On the second move, if Negative plays optimally $s_2$, and Positive plays $s_1$, we get
			$$ \eta(x)\ge \alpha-s_2+o(x-s_1-s_2) +s_1-o(x-s_2-s_1) = 2\alpha > 0$$			
			\item On the second move, if Negative's optimal move is $s_1$, and Positive responds with $s_1$, we get
			$$ \eta(x)\ge \alpha + o(x-s_1-s_1)-o(x-s_2-s_1) = \eta(x-s_2-s_1)$$
            and since, by definition of $X^*$, if $x \not\in X^*$ then $x-s_1-s_2 \not\in X^*$. Therefore $\eta(x-s_2-s_1) \ge 0$ by induction.
		\end{enumerate}
	\end{enumerate}

    This concludes the proof of inequality (\ref{eq:alpha}).
    \end{proof}

The following theorem is the main result for CS with two actions.
\begin{theorem}\label{thm:2actions}
Let the action set be $S = \{s_2, s_1\}$, with $s_1 > s_2$.  Then $\opt(x) = s_2$ if and only if $x\in X^*$.  
 \end{theorem}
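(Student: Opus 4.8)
The plan is to prove the two implications of the equivalence separately, after first translating the conclusion through Definition~\ref{def:opt}. Since the two available actions are $s_2<s_1$ and ties in the defining maximum are resolved towards the larger action, we have $\opt(x)=s_2$ exactly when the sacrifice is \emph{strictly} better than the greedy action, that is
\begin{align*}
s_2-o(x-s_2) > s_1-o(x-s_1), \qquad\text{equivalently}\qquad o(x-s_1)-o(x-s_2) > \alpha .
\end{align*}
Thus the theorem reduces to showing that this strict gap of more than $\alpha$ occurs precisely on $X^*$.

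For the easy direction, $x\in X^*\Rightarrow\opt(x)=s_2$, I would simply quote Lemma~\ref{lem:complementary}. For $x\in X^*(i)$ it already gives $o(x)=is_2-(i-1)s_1>0$ with the complementary strategy (whose first move is $s_2$) optimal, while any line starting with the greedy move $s_1$ is shown there to yield at most $0$. Hence $s_1-o(x-s_1)\le 0<o(x)$, so $s_1$ is not maximizing and $\opt(x)=s_2$.

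The real work is the converse, which I would establish in contrapositive form: if $x\notin X^*$ then greedy is (weakly) optimal, i.e.\ $o(x-s_1)-o(x-s_2)\le\alpha$, so $\opt(x)=s_1\ne s_2$. The ranges $x<s_1$ (terminal or inside $X^*(1)$) and $s_1\le x<s_1+s_2$ (where $o(x-s_1)=0\le o(x-s_2)$) are immediate, so assume $x\ge s_1+s_2$. Setting $P=o(x-s_1-s_2)$, expand
\begin{align*}
o(x-s_1)=\max\{\,s_1-o(x-2s_1),\; s_2-P\,\}, \qquad o(x-s_2)\ge s_1-P ,
\end{align*}
the greedy term of the first maximum being present only when $x\ge 2s_1$. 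If the sacrifice term $s_2-P$ realizes $o(x-s_1)$, then $o(x-s_2)\ge s_1-P=(s_2-P)+\alpha=o(x-s_1)+\alpha$ and the difference is at most $-\alpha$. If instead the greedy term realizes it, then
\begin{align*}
o(x-s_1)-o(x-s_2)\le\big(s_1-o(x-2s_1)\big)-\big(s_1-P\big)=o(w)-o(w-\alpha),
\end{align*}
where $w=x-s_1-s_2$ and $w-\alpha=x-2s_1$.

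The crux is now to bound this single $\alpha$-step difference by $\alpha$, which is exactly the content of Lemma~\ref{lem:bounded diff}, valid when $w\notin X^*$. To secure that hypothesis I would use the translation structure of Definition~\ref{def:X^*}: the elements of $X^*(i)$ arise from those of $X^*(i-1)$ by adding $s_1+s_2$, so if $w\in X^*(j)$ then $x=w+s_1+s_2$ has the form of an $X^*(j+1)$ element; since $x\notin X^*$, the set $X^*(j+1)$ must be empty, and by the criterion (\ref{eq:sacrifice}) this forces $j=i_{\max}$. In that single boundary case (\ref{eq:outx*}) gives $o(w)=s_1-i_{\max}\alpha=s_1\bmod\alpha<\alpha$, so $o(w)-o(w-\alpha)\le o(w)<\alpha$ by nonnegativity of the outcome (Lemma~\ref{lem:bounded_outcome}); in every other case $w\notin X^*$ and Lemma~\ref{lem:bounded diff} applies directly. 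The main obstacle I anticipate is precisely this reduction: seeing that the greedy-versus-sacrifice comparison collapses to a one-step difference of the form that Lemma~\ref{lem:bounded diff} controls, and verifying that $w$ can only re-enter $X^*$ at the top index $i_{\max}$, where the outcome is already below $\alpha$.
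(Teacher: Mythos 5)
Your proposal is correct, but the hard direction is organized quite differently from the paper's. The paper proves ``$x\notin X^*\Rightarrow s_1$ optimal'' by induction on $x$, using the theorem statement itself as the induction hypothesis and splitting into three cases according to whether $x-s_1$ and $x-s_2$ lie in $X^*$; its final sub-case ($x-s_2\in X^*(i_{\max})$) then needs a separate, rather delicate argument showing $o(x-s_1)=\alpha$ via a modular-arithmetic analysis of greedy play never re-entering $X^*$. You avoid both the induction and the membership case-split: by expanding $o(x-s_1)$ directly from Definition~\ref{def:outcome} (casing on which action realizes that maximum) and only \emph{lower-bounding} $o(x-s_2)$ by its greedy branch, every case collapses either to the exact value $-\alpha$ or to the one-step difference $o(w)-o(w-\alpha)$ with $w=x-s_1-s_2$, which is precisely what Lemma~\ref{lem:bounded diff} controls, plus the single boundary case $w\in X^*(i_{\max})$. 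Both routes rest on the same two pillars --- Lemma~\ref{lem:complementary} for the forward direction and Lemma~\ref{lem:bounded diff} for the converse --- but yours is shorter, needs no induction at the level of the theorem, and sidesteps the paper's hardest sub-case entirely; what the paper's heavier, exact computation buys in exchange is the explicit outcome values that feed into Corollary~\ref{cor:2action_outcomes}. One small caveat: when $\alpha$ divides $s_1$, inequality (\ref{eq:sacrifice}) already fails at $i=\lfloor s_1/\alpha\rfloor$, so the largest index $j$ with $X^*(j)\neq\varnothing$ is $i_{\max}-1$ and your boundary value is $o(w)=\alpha$ rather than $s_1\bmod\alpha<\alpha$; this costs you only strictness, and since the contrapositive direction needs only the weak bound $o(w)-o(w-\alpha)\le\alpha$ (ties in Definition~\ref{def:opt} resolve to $s_1$), your conclusion is unaffected --- the paper's remark that $i_{\max}$ is the largest index satisfying (\ref{eq:sacrifice}) has the same blemish.
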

 \begin{proof}
 We prove that $\opt(x)=s_2$ if and only if $x\in X^*$.  
 The proof is split into two cases. 
\begin{enumerate}[(i)]
    \item $2s_2 \le s_1$: greedy play is optimal.
    \item $2s_2 > s_1$:  sacrifice is optimal if and only if $x\in X^*$ 
\end{enumerate}
In Case (i), $i_{\max}=1$, which means that it is never beneficial to sacrifice.  Thus, in this case the optimal play, game convergence and  periodicity is analogous to the full support case, (Theorem~\ref{thm:fullsup}). 

Consider Case (ii). For the direction ``$x\in X^*$ implies $\opt(x)=s_2$'', by Lemma~\ref{lem:complementary} we know that optimal play from position $x\in X^*$ is given by Positive's complementary strategy, which starts by playing action $s_2$.
  
 
 
The proof of the reverse direction ``$\opt(x)=s_2$ implies $x\in X^*$'' uses Lemma~\ref{lem:bounded diff}.  
We prove by induction that for each position $x\not\in X^*$, if $x\ge s_1$ then $s_1$ is an optimal move.  We begin by stating the base case. 

Consider $x\in \{0,\ldots ,2s_1-1\}$. If $x<s_2$, no action is available. (For positions $x \in X^*(1)$, only action $s_2$ is available, so it is optimal.) For positions $x\in \{s_1,\ldots, 2s_1-1\}\subset \Z \setminus X^*$, $s_1$ is the unique optimal action, since it can be countered with at most one $s_2$ action before the end of play, and $2s_2+s_1\ge 2s_1$, by Case (ii). 
    
Assume next that $x\ge 2s_1$. It suffices to prove that playing $s_1$ is weakly better than playing $s_2$, i.e.   
\begin{enumerate}
\item[ ] if $x\not\in X^*$, then $-o(x-s_1) + s_1 \ge -o(x-s_2) + s_2$,
\end{enumerate}
or equivalently
\begin{enumerate}
\item[] if $x\not\in X^*$, then $\alpha \ge o(x-s_1)-o(x-s_2)$.
\end{enumerate}
    
There are three cases, depending on whether $x-s_1$ or $x-s_2$ belongs to $X^*$ respectively. Note that both cannot belong to $X^*$, because $x-s_2-(x-s_1)=\alpha$, and, for all $i$, $X^*(i)$ contains at most $\alpha -1$ consecutive numbers (and more than $s_1$ numbers separate two disjoint sets $X^*(i)$ and $X^*(j))$. 
    
\begin{enumerate}
\item $x-s_1\in X^*$, $x-s_2\not\in X^*$ 
\item $x-s_1\not\in X^*$, $x-s_2\not\in X^*$ 
\item $x-s_1\not\in X^*$, $x-s_2\in X^*$ 
\end{enumerate}
    
For 1., use the statement of the theorem as induction hypothesis, that is $s_2\in \opt(x-s_1)$ and $s_1\in \opt(x-s_2)$. We get
\begin{align*}
o(x-s_1) - o(x-s_2) &= -o(x-s_1-s_2)+s_2+o(x-s_2-s_1)-s_1\\
= -\alpha\\
<\alpha. 
\end{align*}
 For 2., use induction to conclude $s_1\in \opt(x-s_1)$ and $s_1\in \opt(x-s_2)$. We get
\begin{align*}
o(x-s_1) - o(x-s_2) &= -o(x-s_1-s_1)+s_1+o(x-s_2-s_1)-s_1\\
&= -o(x-s_1-s_2-\alpha)+o(x-s_2-s_1)\\
&\le\alpha, 
\end{align*}
if Lemma~\ref{lem:bounded diff} applies, i.e. if $x-s_1-s_2\not\in X^*$. Thus, in this case we are done.  

The other case is whenever $x-s_1-s_2\in X^*$. Since $x\not\in X^*$, this case happens if and only if $x-s_2-s_1 \in X^*(i_{\max})$. By (\ref{eq:outx*}) and Lemma~\ref{lem:bounded_outcome}, in this case, 
\begin{align}
    o(x-s_2-s_1)-o(x-s_2-s_1-\alpha) &\le s_1-\alpha i_{\max}-0\\
    &<\alpha,\label{eq:something}
\end{align}
 where the inequality \eqref{eq:something} is by $i_{\max}=\lfloor\frac{s_2}{\alpha}\rfloor+1>\frac{s_2}{\alpha}$.\\

 For 3., consider first the case $i < i_{\max}$. We use the `duality' (\ref{eq:duality}) between outcomes and number of consecutive positions with outcome $0$ just below $X^*(i)$. Indeed, in this case, Lemma~\ref{lem:duality} implies that there are at least $\alpha$ such consecutive positions with outcome $0$, that is, $o(x-s_2)-o(x-s_1)=s_1-i\alpha - 0 > \alpha $, and so $$o(x-s_1)-o(x-s_2)<-\alpha\le \alpha .$$ 
 
 The remaining case is for $x-s_2\in X^*(i_{\max})$. We use that $o(x-s_2)\ge 0$, and prove that $o(x-s_1) = \alpha$. This suffices, to prove the theorem.
 
 Let us first sketch the idea, of this final part of the proof. In fact, by our previous items, playing optimally from $x-s_1$, there will be an even number of greedy actions, namely $2i_{\max}$, of which the last one is $s_2$. This follows because, none of the greedy actions will end up in $X^*$, and we showed already that $s_1$ is optimal if a player does not start in $X^*(i)$, with $i<i_{\max}$. Indeed, this gives the outcome $\alpha$. 
 
 To finish the proof, let us justify the claim in the previous paragraph. Since $x-s_2\in X^*(i_{\max})$, we have that $x-s_1= i_{\max}(s_1+s_2)-s_1-\alpha +\delta_1$, for some $\delta_1\in\{0,\ldots , \alpha-1\}$. Let $X^*=\{i_{\max}(s_1+s_2)-s_1-\alpha +\delta_2\}$, where $\delta_2\in\{0,\ldots , \alpha-1\}$. If we show that, for all $0<j$, $\delta_2\not\equiv -\alpha +\delta_1-js_1 \pmod{s_1+s_2}$, the claim follows.  So, assume that there is an integer $k$, such that $-\alpha-js_1+k(s_1+s_2)\in\{-\alpha+1,\ldots , \alpha-1\}$, for some $j$. That is, we have that $(k-j)s_1+ks_2=-js_1+k(s_1+s_2)\in\{1,\ldots , 2\alpha-1\}$. By $s_1-s_2=\alpha$, this implies that $k-j=k+1$, i.e. $j=-1$. But, this contradicts the assumption that $j>0$. Therefore, if the players play greedily, they will never play to the set $X^*$. In the proofs of item 1., item 2., and the first part of item 3., we already proved that, for smaller positions than $X^*(i_{\max})$, sacrificing is optimal if and only if playing from $X^*$.
 
 This proves the theorem.
\end{proof}


Let us denote by $[x]_{y}$ the smallest non-negative number congruent to $x$ modulo $y$.
\begin{corollary}\label{cor:2action_outcomes}
The outcomes of the game $S=\{s_2,s_1\}$ are
$$ o(x)=
\begin{cases}
    is_2-(i-1)s_1 = s_1-i\alpha,    &\text{if} \; x\in X^*(i) \\
    0,                              &\text{if} \; y-s_1+i\alpha\le x<y, \text{where}~  y\in X^*(i) \\
    o(y),                           &\text{if there is} \; y\equiv x\Mod{2s_1}, \\                                                        &\text{s.t.} ~y\in X^*(i) \text{, with} ~ y<x \\
    s_1 - o(x-s_1),                 &\text{for all} \; [x]_{2s_1} \in \{s_1,\ldots ,2s_1-1\}
\end{cases}
$$

\end{corollary}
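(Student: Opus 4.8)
The plan is to treat the corollary as the explicit solution of the recursion that Theorem~\ref{thm:2actions} forces on $o$. By that theorem $\opt(x)=s_2$ precisely on $X^*$ and $\opt(x)=s_1$ for every $x\ge s_1$ with $x\notin X^*$, so the maximum in Definition~\ref{def:outcome} collapses to a single term: $o(x)=s_2-o(x-s_2)$ on $X^*$, $o(x)=s_1-o(x-s_1)$ off $X^*$ (for $x\ge s_1$), and $o(x)=0$ for $x<s_2$. Since $o$ is the unique function obeying Definition~\ref{def:outcome}, it suffices to check that each of the four displayed cases agrees with this recursion and that the four cases cover every $x\ge 0$.

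The first two cases cost nothing new: case~1 is Lemma~\ref{lem:complementary} rewritten through \eqref{eq:outx*}, and case~2 is Lemma~\ref{lem:duality}, which supplies the block of $o(\min X^*(i))=s_1-i\alpha$ consecutive zero positions immediately below each $\min X^*(i)$ (letting $y$ range over all of $X^*(i)$ and resolving overlaps by first-match priority recovers exactly this block). For case~4 the one genuine computation is the residue bookkeeping showing that $[x]_{2s_1}\in\{s_1,\ldots,2s_1-1\}$ forces $x\notin X^*$. I would use $is_2+(i-1)s_1=(2i-1)s_1-i\alpha\equiv s_1-i\alpha\pmod{2s_1}$, so that $X^*(i)$ occupies the residues $\{s_1-i\alpha,\ldots,s_1-(i-1)\alpha-1\}$; as $i$ runs over $1,\ldots,i_{\max}$ these are disjoint length-$\alpha$ blocks tiling $\{s_1-i_{\max}\alpha,\ldots,s_1-1\}\subseteq\{1,\ldots,s_1-1\}$. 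In particular no element of $X^*$ has a residue in the upper half $\{s_1,\ldots,2s_1-1\}$, so there $\opt(x)=s_1$ and $o(x)=s_1-o(x-s_1)$; and since subtracting $s_1$ carries the upper-half residues bijectively into the lower half, case~4 fires at most once before control passes to cases~1--3.

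For case~3 I would not merely cite Corollary~\ref{cor:periodicity} but re-derive precisely the periodicity needed by a telescoping induction. The residue computation shows that each element of $X^*$ is the unique member of $X^*$ in its class modulo $2s_1$; hence if $z\equiv x$ is an anchor with $o(z)$ already known, then $z+s_1,z+2s_1,\ldots$ all avoid $X^*$, so $o(z+2s_1)=s_1-o(z+s_1)=s_1-(s_1-o(z))=o(z)$, and by induction $o(z+2ks_1)=o(z)$ for all $k\ge 0$. Taking $z\in X^*(i)$ with $o(z)=s_1-i\alpha$ propagates the case~1 value to every larger position in its class, which is exactly the content of case~3.

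The main obstacle, and where I would be most careful, is exhaustiveness for the zero-valued classes. The lower-half residues split into the $X^*$-residues $\{s_1-i_{\max}\alpha,\ldots,s_1-1\}$ and the complementary \emph{zero residues} $\{0,\ldots,s_1-i_{\max}\alpha-1\}$; the latter carry no element of $X^*$, so case~3 as literally written (propagating only from $X^*$) never reaches their large positions, and cases~1,2,4 miss them too. The clean fix is to observe that every such residue $r$ satisfies $r\le s_1-i_{\max}\alpha-1<s_2$ (since $s_1-i_{\max}\alpha\le s_2$ by $i_{\max}\ge1$), so $r$ is itself a terminal position with $o(r)=0$; running the same telescoping induction from the anchor $z=r$ gives $o(r+2ks_1)=0$ for all $k$, i.e.\ $o\equiv 0$ on the whole class. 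Thus case~3 must be read as genuine period-$2s_1$ periodicity onto the base region supplied by cases~1 and~2, and with this reading the four cases are exhaustive. The remaining check is only that first-match priority reconciles the transient zeros of case~2 with the steady values of cases~1 and~3, which holds because case~2 fires exactly on the positions below the $X^*$-blocks while cases~1 and~3 govern the positions on and above them.
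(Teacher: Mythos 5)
Your proof is correct, and at bottom it is the proof the paper intends: the paper's entire argument for this corollary is the single sentence ``This follows from proof of Theorem~\ref{thm:2actions}'', i.e., once one knows $\opt(x)=s_2$ exactly on $X^*$ (hence $\opt(x)=s_1$ for every $x\ge s_1$ off $X^*$), the displayed formula is the unfolded recursion of Definition~\ref{def:outcome}. What you add---and what the paper omits---is the actual bookkeeping, and in doing it you uncover two defects of the statement itself that any complete proof must address. First, case~2 is false if $y$ ranges over all of $X^*(i)$: in $S=\{5,7\}$, taking $y=18\in X^*(2)$ puts $x=17$ into the range $[15,18)$, yet $o(17)=3$ by case~1; your first-match-priority reading (equivalently, restricting to $y=\min X^*(i)$, which is exactly Lemma~\ref{lem:duality}) is the right repair. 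Second, the four cases are not exhaustive: your computation $is_2+(i-1)s_1\equiv s_1-i\alpha\pmod{2s_1}$ shows that all of $X^*$ sits in lower-half residues, so a class such as $x\equiv 0\pmod{14}$ in $S=\{5,7\}$ contains no element of $X^*$, and the positions $x=42,56,\dots$ are covered by no case even though $o(x)=0$; your fix---running the telescoping induction $o(z+2ks_1)=o(z)$ from terminal anchors $r<s_2$ as well as from anchors in $X^*$---is sound, and that same telescoping (each element of $X^*$ being the unique one in its residue class, with everything above it in the class avoiding $X^*$) is also what genuinely justifies case~3, since Corollary~\ref{cor:periodicity} alone would only give periodicity for ``large enough'' $x$, not from the element of $X^*$ onward. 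The only slip is an edge case the paper itself also fumbles: when $\alpha\mid s_1$, inequality \eqref{eq:sacrifice} fails at $i=i_{\max}$ and $X^*(i_{\max})=\varnothing$, so your tiling claim and the bound $r\le s_1-i_{\max}\alpha-1$ should be phrased with the largest $i$ satisfying $i\alpha<s_1$ in place of $i_{\max}$; the conclusion you need (every zero residue is $<s_2$, hence terminal) survives unchanged.
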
 

\begin{proof}
    This follows from proof of Theorem~\ref{thm:2actions}.
\end{proof}
        In particular, the periodic outcome pattern, at convergence, is obtained by applying $i = i_{\max}$. See also Figure~\ref{fig:out}.  
         
         Note that the first three items concern the outcomes of the positions in the congruence classes $0,\ldots , s_1-1 \pmod{2s_1}$ and the last item concerns the `anti-symmetric' part among the heap sizes $s_1,\ldots , 2s_1-1 \pmod{2s_1}$. The third item shows that once the outcomes for positions in $X^*(i)$ have been computed, then they stabilize, for congruent larger heap sizes modulo $2s_1$.
         
         Another consequence is that if $[x]_{2s_1} \in \{s_1,\ldots, 2s_1-1\}$, then $s_1\in \opt(x)$.

\begin{corollary}\label{cor:2action_explicit_convergence}
Consider CS with two possible actions, $S=\{s_2,s_1\}$, with $s_1>s_2>s_1/2$. Then the largest heap size for which Positive can play the smaller action $s_2$ until the game ends, and obtain the optimal play outcome, is 
$$s_1 \left\lfloor{\frac{s_2}{\alpha}}\right\rfloor + s_2 \left\lfloor{\frac{s_1}{\alpha}}\right\rfloor+\alpha-1$$
\end{corollary}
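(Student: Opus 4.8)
The plan is to identify the heap size described in the statement with $\max X^*$, the largest position at which a full sacrifice line is optimal, and then to evaluate this maximum in closed form using the two equivalent expressions for $i_{\max}$ from Lemma~\ref{lem:sacrbound}.

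First I would argue that the heap sizes from which Positive can play $s_2$ all the way to the end of the game while still realizing the optimal outcome are exactly the elements of $X^*$. By Theorem~\ref{thm:2actions}, $\opt(x)=s_2$ precisely when $x\in X^*$. Moreover, for $x\in X^*(i)$ Lemma~\ref{lem:complementary} tells us that Positive's optimal play is the complementary strategy while Negative's optimal play is greedy; on this optimal line Negative answers every sacrifice with $s_1$, so Positive complements with $s_2$ on each of her $i$ turns, and the game terminates with the residual $\delta<\alpha\le s_2$ pebbles. Thus from every $x\in X^*$ there is an optimal line in which Positive plays only $s_2$. The one point needing care here is the converse direction, namely that no position strictly above $X^*(i_{\max})$ qualifies: above $X^*(i_{\max})$ the outcome has already converged to the greedy pattern, and the final part of the proof of Theorem~\ref{thm:2actions} shows that greedy play from such positions never re-enters $X^*$, so a full all-$s_2$ line by Positive would make strictly fewer beneficial sacrifices and hence be sub-optimal. (Concretely, a single $s_2$ move may tie $s_1$ just above $X^*(i_{\max})$, but the continuation forces a non-optimal sacrifice later.) Consequently the set of qualifying positions is exactly $X^*$, and the quantity sought is $\max X^*$.

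Next I would locate the maximum of $X^*$. From Definition~\ref{def:X^*}, the largest element of the nonempty block $X^*(i)$ is $i s_2+(i-1)s_1+(\alpha-1)=i(s_1+s_2)-s_1+(\alpha-1)$, which is strictly increasing in $i$. Hence $\max X^*$ is attained at the largest admissible index, which by Lemma~\ref{lem:sacrbound} is $i_{\max}=\lfloor s_1/\alpha\rfloor=1+\lfloor s_2/\alpha\rfloor$; the hypothesis $s_2>s_1/2$ guarantees $2s_2>s_1$, placing us in Case (ii) with $i_{\max}\ge 2$, so a genuine sacrifice does occur. Therefore
\begin{align*}
\max X^* = i_{\max}s_2+(i_{\max}-1)s_1+(\alpha-1).
\end{align*}

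Finally I would substitute the two equivalent forms of $i_{\max}$. Writing $i_{\max}=1+\lfloor s_2/\alpha\rfloor$ in the coefficient of $s_2$ gives $i_{\max}s_2=s_2+\lfloor s_2/\alpha\rfloor s_2$, while $i_{\max}-1=\lfloor s_2/\alpha\rfloor$ gives $(i_{\max}-1)s_1=\lfloor s_2/\alpha\rfloor s_1$. Re-collecting the stray term via $s_2+\lfloor s_2/\alpha\rfloor s_2=s_2(1+\lfloor s_2/\alpha\rfloor)=s_2\lfloor s_1/\alpha\rfloor$ (valid since $s_1=s_2+\alpha$ forces $\lfloor s_1/\alpha\rfloor=1+\lfloor s_2/\alpha\rfloor$) yields $\max X^*=s_1\lfloor s_2/\alpha\rfloor+s_2\lfloor s_1/\alpha\rfloor+\alpha-1$, exactly the claimed formula. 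The game-theoretic content is entirely supplied by Theorem~\ref{thm:2actions} and Lemma~\ref{lem:complementary}, so the only genuine obstacle is the bookkeeping in this last step together with the verification that positions above $X^*(i_{\max})$ do not admit an optimal all-$s_2$ line; neither is deep, and the hypothesis $s_2>s_1/2$ merely ensures the problem is non-degenerate.
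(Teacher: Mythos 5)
There is a genuine gap, and it sits exactly where you flagged ``the one point needing care.'' Your proof rests on two claims: (a) the qualifying heap sizes are precisely the elements of $X^*$, and (b) $\max X^*$ lies in the block of index $i_{\max}$. Both are false whenever $\alpha\mid s_2$, and this is not a fringe case: it includes every game with consecutive actions ($\alpha=1$, i.e.\ $s_1=s_2+1$). Since inequality (\ref{eq:sacrifice}) is strict, $X^*(i_{\max})\ne\varnothing$ if and only if $i_{\max}s_2>(i_{\max}-1)s_1$, i.e.\ if and only if $s_2>\alpha\lfloor s_2/\alpha\rfloor$, which says exactly that $\alpha\nmid s_2$. (The sentence in the paper asserting that $i_{\max}$ is the largest $i$ satisfying (\ref{eq:sacrifice}) carries the same caveat.) Take $S=\{2,3\}$: here $i_{\max}=3$ but $X^*=\{2,7\}$, so $\max X^*=7$, whereas the corollary's formula gives $3\cdot2+2\cdot3+1-1=12$. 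The correct answer is $12$, not $7$: from $x=12$, Positive playing $2,2,2$ against Negative's greedy $3,3$ ends the game with score $3\cdot2-2\cdot3=0$, and indeed $o(12)=\max\{2-o(10),\,3-o(9)\}=\max\{0,0\}=0$. Because this is a tie, Definition~\ref{def:opt} selects the larger action, so $\opt(12)=3$ and $12\notin X^*$. The corollary asks only that the all-$s_2$ line \emph{achieve} the optimal outcome, not that $s_2$ be the strictly preferred action, and it is precisely these tie positions that your parenthetical remark dismisses incorrectly: in the divisible case the tie line runs all the way to the end of the game and does realize the optimal outcome.

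Your two errors cancel numerically: plugging $i_{\max}$ into the block formula $is_2+(i-1)s_1+\alpha-1$ is what produces the stated expression, and that expression is the right answer in both cases --- but when $\alpha\mid s_2$ it is the maximum of an \emph{empty} block, not of $X^*$, so the derivation as written proves a false intermediate statement ($\max X^*$ is then smaller by $s_1+s_2$). The repair is to index by the non-strict inequality of Lemma~\ref{lem:sacrbound}: the sought heap size is the largest $x=is_2+(i-1)s_1+\delta$, $0\le\delta<\alpha$, such that $is_2\ge(i-1)s_1$, namely $i=i_{\max}$. From such $x$ the all-$s_2$ line against a greedy Negative ends the game with score $i_{\max}s_2-(i_{\max}-1)s_1\ge 0$, which equals $o(x)$ (strictly positive, by Lemma~\ref{lem:complementary}, when $\alpha\nmid s_2$; zero, by a short tie argument, when $\alpha\mid s_2$). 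One must then also verify, e.g.\ via Corollary~\ref{cor:2action_outcomes}, that no larger heap qualifies: for larger $x$ the all-$s_2$ score cannot exceed $i_{\max}s_2-(i_{\max}-1)s_1$ (an $(i_{\max}+1)$-st sacrifice nets $s_2-i_{\max}\alpha<0$), while the optimal outcome strictly exceeds that value. Only the first, non-divisible situation is covered by the argument you gave.
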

\begin{proof}
This follows by Theorem~\ref{thm:2actions}.
\end{proof}

Note that the formula in Corollary~\ref{cor:2action_explicit_convergence} implies explicit game convergence at $\xi(S)=s_1 \left\lceil{\frac{s_2}{\alpha}}\right\rceil + s_2 \left\lceil{\frac{s_1}{\alpha}}\right\rceil-2s_2=(s_1+s_2) \left\lceil{\frac{s_2}{\alpha}}\right\rceil-s_2$; for example in case $s_1=s_2+1$, then $\xi(x)=2s_2^2$.

In Figures~\ref{fig:optact} and \ref{fig:out} we sketch the optimal actions modulo $s_2+s_1$ and the outcomes modulo $2s_1$, of the two-action games with $2s_2\ge s_1$.\vspace{2cm}

\begin{figure}[ht!]
\centering
\begin{tikzpicture}[thick,scale=.72]%
   \foreach \x in {90, -130,-50} { 
   \draw[-]  (\x:2.9) -- (\x:3.1) node{}; }   
   \draw (0, 0) node{\small{Pile size}};
   \draw(0,2.27) node{$0$};
   \draw(1.5,-1.55) node{$s_2$};
   \draw(-1.5,-1.55) node{$s_1$};
   \draw(-4.7,0) node{$s_1\in \opt(x)$};
   \draw(4.7,0) node{$s_1\in \opt(x)$};
   \draw(0.3,-3.8) node{$\opt(s_2 + \delta)=\{s_2\}$};
     
\draw[thin] (0,0) circle (3 cm) node{ };
\draw (0,4) node{\small{\bf Optimal actions before convergence modulo $(s_1+s_2)$}};
\end{tikzpicture}\caption{Optimal actions before convergence, for pile sizes modulo $(s_1+s_2)$. The positions in $X^*$ are of the form $x+(s_1+s_2)i$, for $0\le i< \lfloor\frac{s_1}{\alpha}\rfloor$, and where $s_2 \le x < s_1$. The pile sizes are pictured on the inside and the optimal actions on the outside.}\label{fig:optact}
\end{figure}
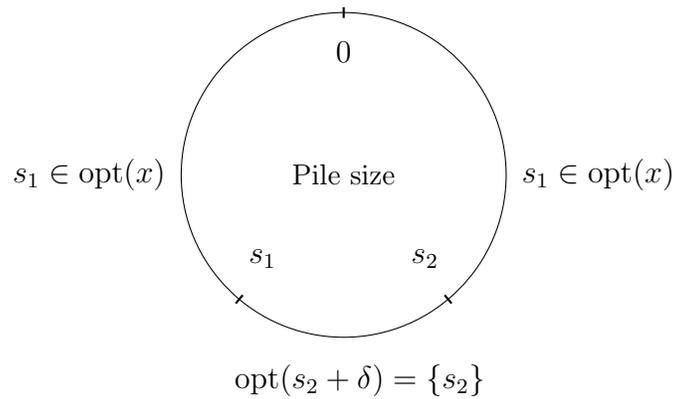

\clearpage
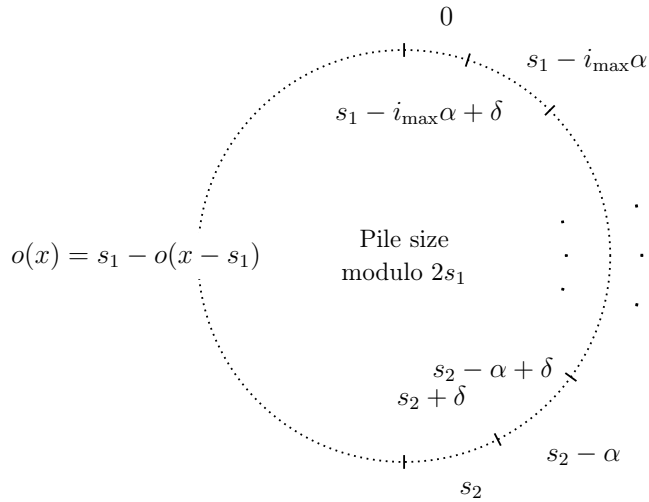
\begin{figure}[ht!]
\centering
\scalebox{0.87}{
\begin{tikzpicture}[thick,scale=1.05]%
\begin{scope}[xshift=0cm, yshift=10 cm]
\draw (0, 0) node{\small{Pile size}};
    \foreach \x in {90, 150, 270,330}{ 
    \draw[-]  (\x:2.9) -- (\x:3.1) node{};}

   \draw (0,4) node{{\bf The outcomes of the first $2s_1$ positions }};
   \draw(0,2.27) node{$x=0$};
     \draw(1.7,-1.15) node{$x=s_2$};
      \draw(3.35,-2.95) node{$o(s_2+\delta)=s_2$};
      \draw(4.45,1.55) node{$o(x)=0$};
      \draw(-4.45,-1.55) node{$o(x)=s_1$};
       \draw(-4.2,2.7) node{$o(2s_1-\delta) = s_1-s_2$};
   \draw[dotted] (0,0) circle (3 cm) node{ };
   
\end{scope}
\begin{scope}[xshift=0cm, yshift=0 cm]
   \draw[dotted] (0,0) circle (3 cm) node{ };
   \draw (0, 0.25) node{\small{Pile size}}; 
   \draw (0, -0.25) node{\small{modulo $2s_1$}}; 
   \foreach \x in {90,-90+6*27,-90+5*27,-36,-63,-90}{ 
    \draw[-]  (\x:2.9) -- (\x:3.1) node{};}
    
     \foreach \x in {12.3,12,0,0.3,-12,-11.7}{ 
    \draw[-]  (\x:2.34) -- (\x:2.38) node{};}
  \foreach \x in {12.3,12,0,0.3,-12,-11.7}{ 
    \draw[-]  (\x:3.44) -- (\x:3.48) node{};}
       \foreach \x/\xtext in {
            84/s_1-i_{\max}\alpha+\delta,
            -52/s_2-\alpha+\delta,
            -79/s_2+\delta
        }
      \draw (\x:2.1cm) node {$\xtext$};
       \foreach \x/\xtext in {
       180/o(x)=s_1-o(x-s_1),
       47/s_1-i_{\max}\alpha,
       -48/s_2-\alpha
        }
      \draw (\x:3.9cm) node[fill=white] {$\xtext$};
 \foreach \x/\xtext in {
       80/0,
      -74/s_2
        }
      \draw (\x:3.55cm) node[fill=white] {$\xtext$};
	   \draw (0,4.45) node{{\bf The outcomes at convergence modulo $2s_1$}};
\end{scope}
\end{tikzpicture}
}\caption{Initial outcomes (top) and outcomes at convergence (bottom) for pile sizes modulo $2s_1$, for 2-action games. The pile sizes are pictured on the inside and the outcomes on the outside of the respective circle.}\label{fig:out}  
\end{figure}

 

\clearpage

\clearpage
\section{CS with truncated support}\label{sec:truncsup}
In Section~\ref{sec:full_support} we have a simple proof for the full support case, and this might lead one to think that the generalized case of \emph{truncated support} is similarly simple.  However we do not yet  understand the full class of truncated support games.  So far, our efforts lead us to the intriguing Conjecture~\ref{con:duality}.

\begin{definition}[Truncated support games] 
Consider a game $S$, with $m=\max S\ge 2$, of the form $S=\{a, a+1,\ldots , m\}$, where $a\in \{1, m-1\}$, so that $|S|=m-a+1$ and we say that $S$ is ($a-1$)-truncated.
\end{definition}

The truncated support games includes as special cases both all games with full support ($a=1$, 0-truncated) and some games with two actions ($a=m-1$, $m-2$-truncated) which are the games that have the slowest convergence.  

For each $a$, we estimate in which interval of size $2m$, optimal play converges to the maximal action $m$. 
\begin{definition}[Convergence interval]
    If $\xi(S)\in\{2(j-1)m,\ldots , 2jm\}$, then the interval of convergence is $\tr^m_a=j$. Let $\tr^m$ denote the sequence of the form $\tr^m=(\tr^m_a)_{a=1}^{m-1}$. 
\end{definition}

\begin{example}\label{ex:truncated suport m=5}
When $m=5$, then the sequence is $\tr^5 = (\tr^5_1, \tr^5_2, \tr^5_3, \tr^5_4) = (1, 2, 2, 4)$. 
 Here, the first entry $\tr^5_1 =1$ shows that when $S=\{1, 2, 3, 4, 5\}$ (which is the full support game of size 5), then the convergence to greedy action in optimal play occurs already in the \emph{first} interval of size $10$ (convergence at position $x=10$). The last entry, $\tr^5_4 =4$, concerns the game $S = \{4, 5\}$, and convergence occurs by the $4^{\rm th}$ interval of size $10$. 
\end{example}

\begin{table}[ht]
\centering
\begin{tabular}{|c||c|c|c|c|c|c|c|c|c||c|} 
\hline
$m \setminus a$&1&2&3&4&5&6&7&8&9& $\#x$\\ \hline
$2 $&1&&&&&&&&&1\\\hline
$3$ &1&2&&&&&&&&2\\\hline
$4$&1&2&3&&&&&&&3\\\hline
$5$&1&2&2&4&&&&&&3\\\hline
$6$&1&2&2&3&5&&&&&4\\\hline
$7$&1&2&2&2&3&6&&&&4\\\hline
$8$&1&2&2&2&3&4&7&&&5\\\hline
$9$&1&2&2&2&2&3&4&8&&5\\\hline
$10$&1&2&2&2&2&3&3&5&9&5\\
\hline
\end{tabular} 
\caption{The convergence interval $\tr^m_a$ for every $m \in \{2, \ldots, 10\}$ and every $a < m$}
\label{tabel:truncated}
\end{table}

The $a^{\rm th}$ column in Table~\ref{tabel:truncated} shows the convergence for $(a-1)$-truncated support games, for $m\in\{2,\ldots, 10\}$. The $\# x$ column is the number of unique values of $\tr^m_a$.

\noindent From this table alone, for $a\ge 2$, the sequence of number of occurrences is non-increasing. But this is not true in general. To obtain some more insight, we plot the entries for $m=25,50,100$.  Via early observations, these pictures seem to converge to some function of the form $\frac{A}{\sqrt{B-x}}$; the appearing symmetry has a precise formulation, explained in the below conjecture.

\begin{figure}[ht] 
\includegraphics[width = .30\textwidth]{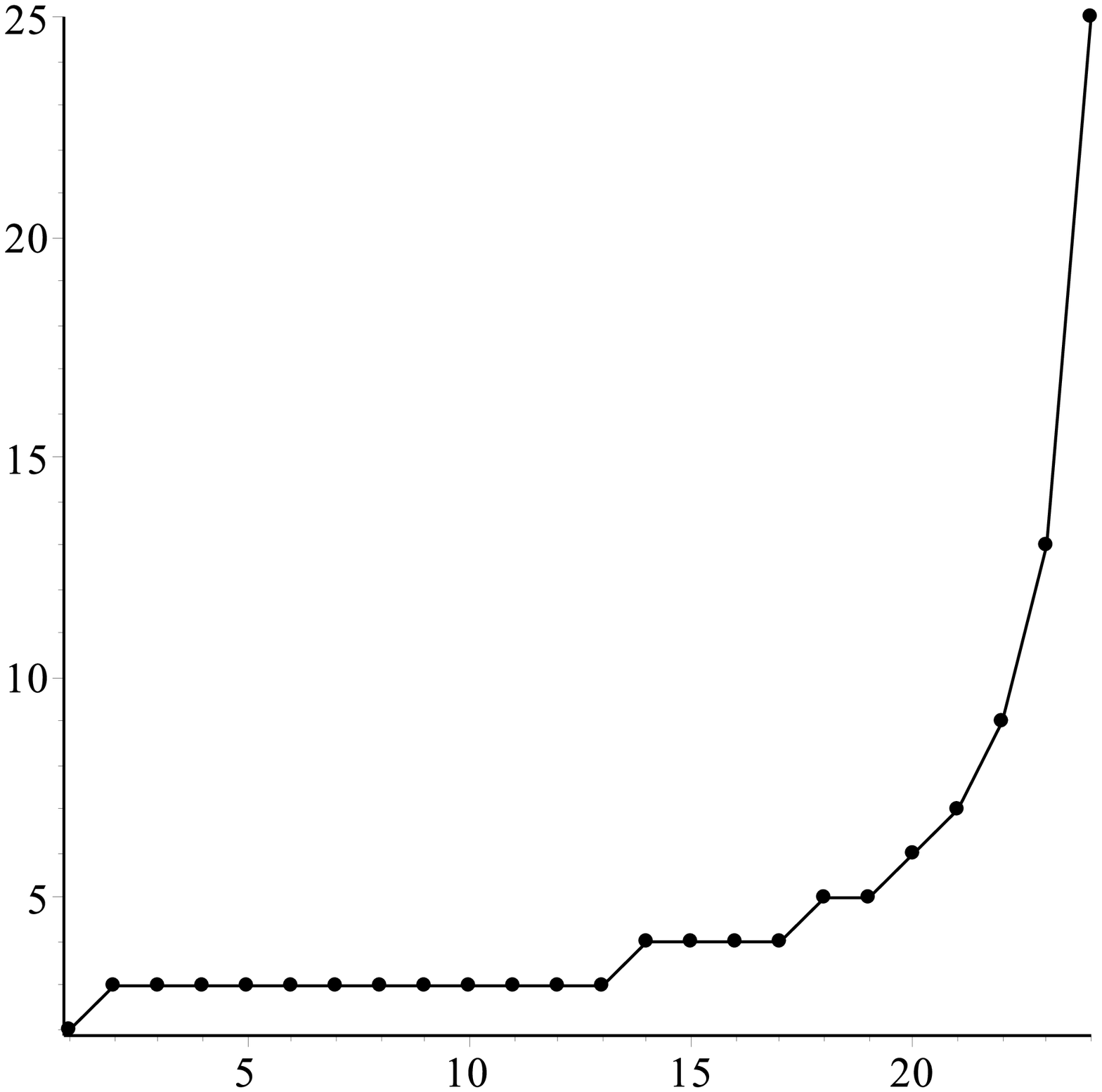}
\includegraphics[width = .30\textwidth]{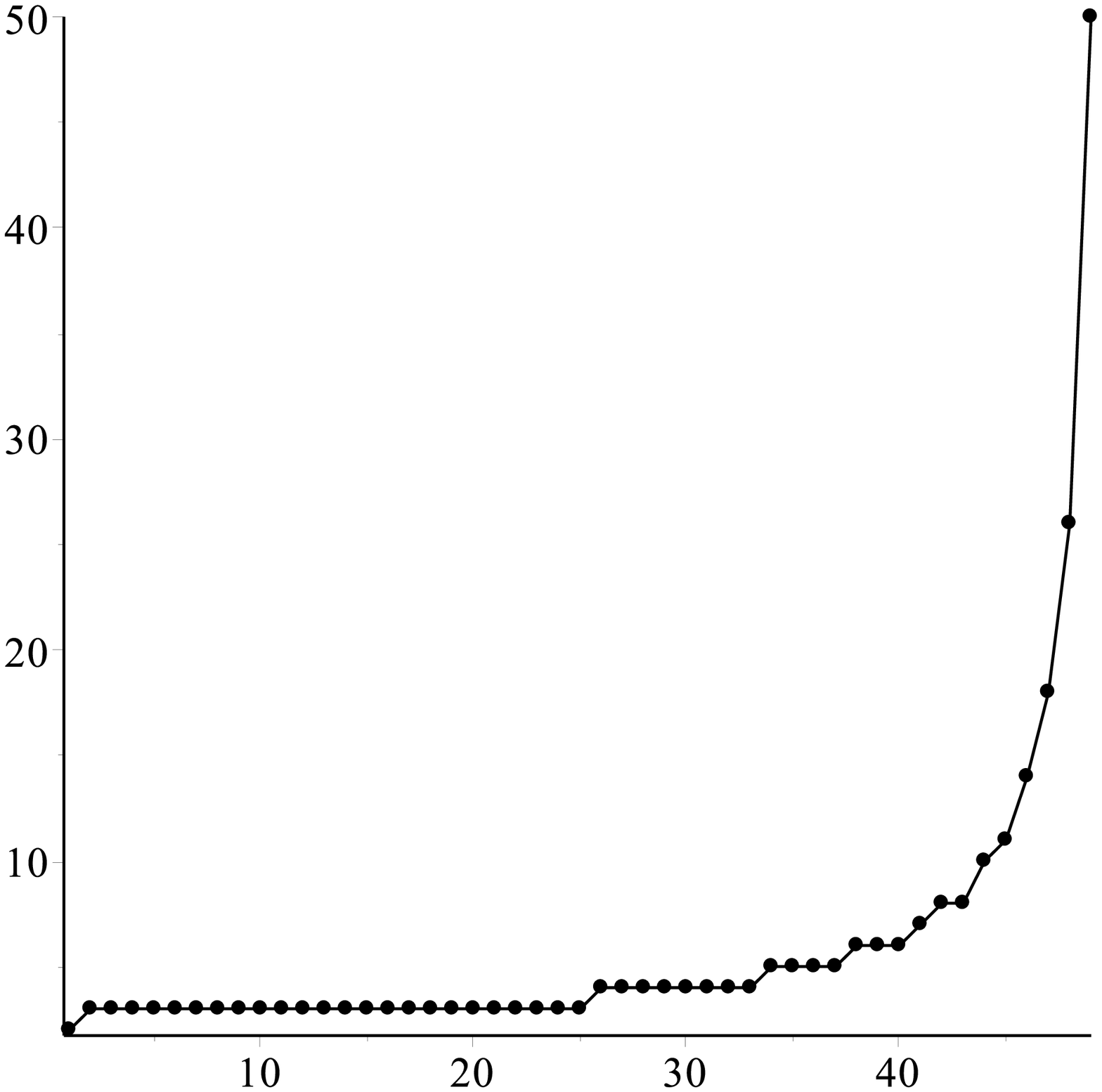}
\includegraphics[width = .30\textwidth]{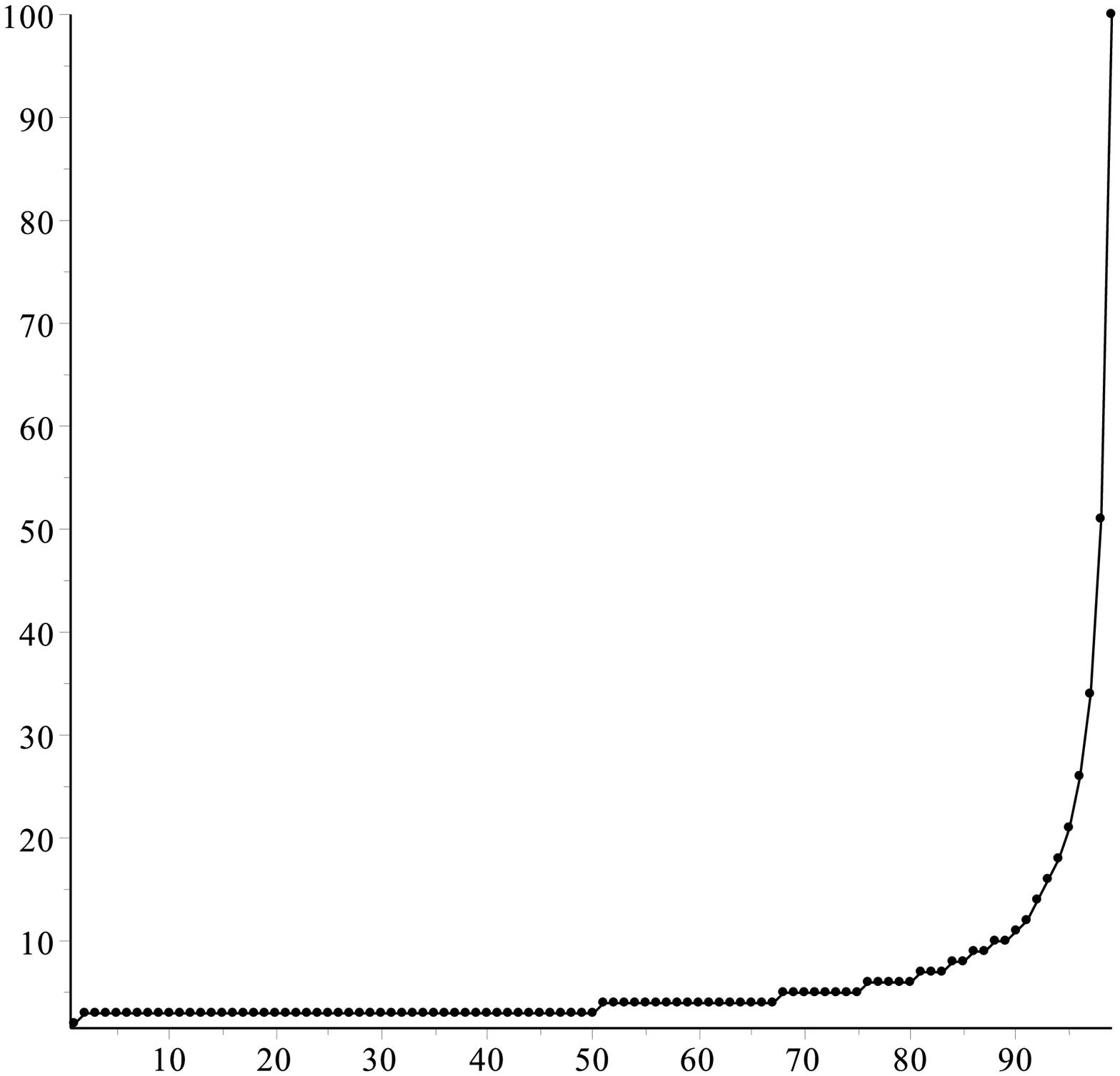}
\caption{$\tr^m$ as a function of $a$ for games with $m=25,50,100$ (left, middle and right figures respectively)}
\end{figure}

For each $m=\max S$, shrink the $\tr^m$ sequence to the set $x^m=x=\{\tr^m_a\}$ and enumerate the elements in increasing order; we interpret $x$ as a sequence $x^m=(x_a)$ with $x_1 = 1$ (by the theorem for full support) and $\max x^m = m-1$ (by the support size 2 result). We have, for all $a\ge 1$, $x_a<x_{a+1}$. 
But, what is the number of elements in $x$, for each $m$? The initial sizes of these sets are displayed in the last column of the table, as $\# x$. 

Study the first differences $\Delta_a^m=x^m_{a+1}-x^m_a$, $a\ge 1$.

Define, for all $m\ge 3$, and for all $1\le j\le \#x$, $M_j := \#\{a\mid x^m_j = \tr^m_a\}$. 

One can prove the following result by combining methods and results in Theorem~\ref{thm:fullsup} and Theorem~\ref{thm:2actions}.

\begin{theorem}\label{thm:duality}
For $a\in \{2,\ldots , \lceil m/2\rceil\}$, $\tr^{m}_a=2$, and moreover, $\Delta^m_{m-1}=\tr^{m}_{m} - \tr^{m}_{m-1} = \lfloor m/2\rfloor = \#\{a\mid \tr^{m}_a=2\}=M_2$. 
\end{theorem}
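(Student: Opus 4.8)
The plan is to pin down the convergence point $\xi(S)$ exactly for the handful of truncated games that the statement refers to, and then read off the convergence intervals $\tr^m_a$. The work splits into the first assertion, concerning $S=\{a,\dots,m\}$ with $2\le a\le\lceil m/2\rceil$, and the statements about $M_2$ and the last first-difference, which concern the two most slowly converging games, $\{m-1,m\}$ and $\{m-2,m-1,m\}$.

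For the first assertion I would compute the outcome $o$ one length-$m$ block at a time, modulo $m$, by induction on the heap, in the spirit of the proof of Theorem~\ref{thm:fullsup}; by Lemma~\ref{lem:big_before} I assume non-increasing play throughout. On the endgame $\{0,\dots,m-1\}$ greedy play gives $o(x)=0$ for $x<a$ and $o(x)=x$ for $a\le x\le m-1$; on $\{m,\dots,2m-1\}$ it gives $o(m+r)=m$ for $r<a$ and $o(m+r)=m-r$ for $a\le r\le m-1$. The decisive block is $\{2m,\dots,3m-1\}$: greedy play yields outcome $0$ at $x=2m+r$ with $r<a$, but since $a\le\lceil m/2\rceil$ the single sacrifice of playing $a$ lands on a heap $2m+r-a$ of outcome $a-r$, so by the recursion it yields $a-(a-r)=r>0$ for $1\le r\le a-1$. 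These are genuinely (strictly) beneficial sacrifices sitting just above $2m$ -- the two-action phenomenon of Theorem~\ref{thm:2actions} surfacing inside a truncated game. I would then verify by induction that from $2m$ onward the outcome is $2m$-periodic, namely $o(2m+r)=r$ and $o(3m+r)=m-r$ for $0\le r\le m-1$, so that no sacrifice is strictly beneficial at or beyond $2m+a$ (from $4m$ on the best sacrifice merely ties greedy). This gives $\xi(S)=2m+a$, and since $2m<2m+a\le 3m\le 4m$ we conclude $\tr^m_a=2$.

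For the remaining assertions I would first apply Corollary~\ref{cor:2action_explicit_convergence} to $\{m-1,m\}$ (where $\alpha=1$) to get $\xi=2(m-1)^2$, hence $\tr^m_{m-1}=m-1=\max x^m$. Next I would analyze the three-action game $\{m-2,m-1,m\}$ and show $\tr^m_{m-2}=\lfloor m/2\rfloor$, so that the second-largest distinct value is $x^m_{\#x-1}=\lfloor m/2\rfloor$; the last first-difference is then $\Delta^m_{\#x-1}=(m-1)-\lfloor m/2\rfloor=\lceil m/2\rceil-1$. To evaluate $M_2=\#\{a\mid\tr^m_a=2\}$ I would combine the first assertion (which gives $M_2\ge\lceil m/2\rceil-1$) with a proof that every $a>\lceil m/2\rceil$ has $\tr^m_a\ge 3$: when $2a>m+1$ the block-periodicity from $2m$ breaks down (for small residues $r$ the heap $m+r-a$ has outcome $m$ rather than $a-r$), and instead a two-level chain of sacrifices, built as in the sets $X^*(i)$ of Theorem~\ref{thm:2actions}, produces a strictly beneficial sacrifice beyond $4m$. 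Hence $M_2=\lceil m/2\rceil-1$, which equals the last first-difference and, for odd $m$, the stated value $\lfloor m/2\rfloor$.

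The main obstacle is the genuinely multi-action bookkeeping in the second part: computing $\tr^m_{m-2}$ for $\{m-2,m-1,m\}$ and certifying $\tr^m_a\ge 3$ for $a>\lceil m/2\rceil$. Here the clean two-action machinery -- the complementary strategy of Lemma~\ref{lem:complementary} and the explicit description of $X^*$ -- does not transfer verbatim, because a player now commands sacrifices of several different sizes and the beneficial-sacrifice positions form a layered pattern rather than a single family $X^*(i)$. I expect to tame this by an induction that tracks, level by level, the number and total size of each player's sacrifices before the endgame, merging the turn-parity argument behind Lemma~\ref{lem:complementary} with the block-periodicity bookkeeping used for Theorem~\ref{thm:fullsup}.
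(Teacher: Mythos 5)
The paper never actually proves this theorem: it only remarks that it ``can be proved by combining methods and results in Theorem~\ref{thm:fullsup} and Theorem~\ref{thm:2actions}'', so there is no detailed argument to compare against, and your plan is precisely the combination the paper gestures at. Your treatment of the first assertion is correct and essentially complete. The block values check out: $o(x)=x$ on $\{a,\ldots,m-1\}$ and $0$ below $a$; $o(m+r)=m$ for $r<a$ and $o(m+r)=m-r$ for $a\le r\le m-1$; at $x=2m+r$ with $1\le r\le a-1$ the sacrifice $a$ lands on outcome $a-r$ (here $a\le\lceil m/2\rceil$ is exactly what guarantees $2a-m\le 1\le r$, so the heap $m+(m+r-a)$ falls in the $o=m-r'$ regime) and hence yields $r>0$ against greedy's $0$; and from $2m+a$ onward the largest maximizing action is always $m$ (unique in odd-indexed blocks, largest among ties elsewhere). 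This gives $\xi(S)=2m+a$ and so $\tr^m_a=2$.

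The genuine gap is in the ``moreover'' chain of equalities, which your proposal reduces to facts it does not prove: (i) $\tr^m_{m-2}=\lfloor m/2\rfloor$ for the three-action game $\{m-2,m-1,m\}$, and (ii) $\tr^m_a\ge 3$ for every $a>\lceil m/2\rceil$. You explicitly defer both, and they are the genuinely multi-action content of the theorem, where the complementary strategy of Lemma~\ref{lem:complementary} and the sets $X^*$ do not apply; as long as they are only ``expected to be tamed by an induction'', the second half of the statement is unproven. Moreover, even granting (i) and (ii), your step ``so that the second-largest distinct value is $\lfloor m/2\rfloor$'' needs an \emph{upper} bound you never address, namely $\tr^m_a\le\lfloor m/2\rfloor$ for all $2\le a\le m-2$ (for instance via monotonicity of $a\mapsto\tr^m_a$); claim (ii) is a lower bound and cannot deliver this, so the identification of the last first difference as $(m-1)-\lfloor m/2\rfloor$ does not follow from what you list. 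Finally, the parity discrepancy you mention only in passing deserves to be front and center: your value $M_2=\lceil m/2\rceil-1$ agrees with the paper's own Table~\ref{tabel:truncated} (where $M_2=3$ for $m=8$ and $M_2=4$ for $m=10$, each equal to the last first difference), whereas the theorem's claimed common value $\lfloor m/2\rfloor$ fails there whenever $m$ is even. So what you are really proving is a corrected statement, with $\lfloor m/2\rfloor$ replaced by $\lceil m/2\rceil-1$ (these agree exactly when $m$ is odd), and an honest write-up should say so explicitly rather than bury it in the final clause.
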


This result reflects an emerging `duality' between individual games and sequences of games, which appears to continue in the inner regions of the pictures. We make the following conjecture.

\begin{conjecture}[Duality]\label{con:duality}
Consider any truncated CS.
\begin{itemize}
\item For all $m\ge 2$, $\# x^m = \left\lfloor \sqrt{4m-7} \right\rfloor $ (corresponding to sequence OEIS: A000267).
\item The first differences, $\Delta^m$, equal in reverse order the number of multiplicities of the numbers in $\tr^m$.  That is, for all $a$, $M_{m+1-a}=\Delta^m_a$.
\end{itemize}
\end{conjecture}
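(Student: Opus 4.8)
The plan is to reduce both bullets to a single structural property of the \emph{staircase} $f(a):=\tr^m_a$, viewed as a function on $\{1,\ldots,m-1\}$. First I would record the anchors already available: $f(1)=1$ by Theorem~\ref{thm:fullsup}, and $f(m-1)=m-1$ since Corollary~\ref{cor:2action_explicit_convergence} gives $\xi(\{m-1,m\})=2(m-1)^2$, which lands in the $(m-1)$-st window of width $2m$. I would next prove that $f$ is non-decreasing in $a$ (more truncation delays convergence), by comparing optimal play in $S_a=\{a,\ldots,m\}$ and $S_{a+1}$: deleting the smallest action only coarsens the sacrifices available, so the convergence position of $S_{a+1}$ cannot precede that of $S_a$; the two endpoints then bracket the value range as a subset of $\{1,\ldots,m-1\}$. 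With $f$ monotone, its graph splits into plateaus of lengths $M_1,\ldots,M_n$ (here $n=\#x^m$) at the distinct heights $x_1<\cdots<x_n$, separated by jumps $\Delta^m_1,\ldots,\Delta^m_{n-1}$, and the second bullet is exactly the statement that reading the plateau lengths backwards reproduces the jumps, $\Delta^m_a=M_{n+1-a}$ (the ``reverse order'' phrasing, with the index understood as $n+1-a$). Crucially, Theorem~\ref{thm:duality} is precisely the outermost instance $a=n-1$ of this identity, since it matches the last jump $\Delta^m_{n-1}=x_n-x_{n-1}$ with $M_2=\#\{a:\tr^m_a=2\}=\lfloor (m-1)/2\rfloor$.

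For the duality I would argue by peeling layers from the outside in, with Theorem~\ref{thm:duality} as the base case. The inductive step needs a recursion that matches the games $S_a$ responsible for an inner plateau with those responsible for the mirrored jump---this is the ``duality between individual games and sequences of games'' alluded to after Theorem~\ref{thm:duality}. Concretely, I would seek a level-set formula expressing, for each value $v$, the block $\{a:\tr^m_a=v\}$ (hence $M_j$) in terms of a simpler auxiliary convergence quantity, and show that the same quantity governs the gap $x_{j}-x_{j-1}$ at the reflected index. Combined with monotonicity and the base case, this would propagate $\Delta^m_a=M_{n+1-a}$ through all layers.

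For the count I would use the multiplicity profile produced by the same level-set analysis. Summing gives $\sum_{j=1}^n M_j=m-1$, and by the duality $\sum_{a}\Delta^m_a=x_n-x_1=m-2$ with jumps that are (to be shown) non-decreasing and whose largest term $\Delta^m_{n-1}$ equals $M_2=\lfloor(m-1)/2\rfloor$. The number of steps required to climb from $1$ to $m-1$ with jumps forced to grow in this mirrored fashion is $\Theta(\sqrt m)$; pinning the exact value reduces to locating $m$ between consecutive ``triangular'' thresholds, which I expect to yield $\#x^m=\lfloor\sqrt{4m-7}\rfloor$ (OEIS A000267) after a short arithmetic verification anchored at the boundary games $a=1$ and $a=m-1$.

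The main obstacle is the level-set/characterization step: the paper itself notes that truncated convergence is not understood in general, so obtaining a usable formula for $\tr^m_a$ at intermediate $a$ is the crux on which both bullets rest. The mitigating observation is that only the \emph{interval} of convergence is required, not $\xi(S_a)$ itself; I would therefore try to bound the total value of beneficial sacrifices available to each player---generalizing the sacrifice bound of Lemma~\ref{lem:sacrbound} and the explicit two-action count behind Corollary~\ref{cor:2action_explicit_convergence}---tightly enough to place $\xi(S_a)$ within a single width-$2m$ window, which is all that the conjecture needs.
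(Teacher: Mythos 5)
You should first note what you are up against: the statement is a \emph{conjecture} in the paper, and the authors explicitly say they ``have up to date no means of explaining this proposed duality''; the only thing proved is Theorem~\ref{thm:duality} (the outermost instance), plus numerical verification up to $m=200$. Your proposal does not close this gap, and by your own admission it cannot: the ``level-set formula'' expressing $\tr^m_a$ (or even just its convergence window) for intermediate truncations $1<a<m-1$ is exactly the open problem on which the whole conjecture rests. As it stands, your ``peeling layers from the outside in'' induction has a base case --- Theorem~\ref{thm:duality} gives the identity only at the outermost index --- but no inductive step: no recursion is actually constructed that matches an inner plateau $\{a:\tr^m_a=v\}$ to the mirrored jump $x^m_j-x^m_{j-1}$, and nothing in the paper (nor in your sketch) supplies a mechanism for it. So this is a research programme, not a proof, and its crux coincides with the unsolved truncated-support problem of Section~\ref{sec:truncsup}.

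There are also secondary unproved steps that your plan treats as routine. Monotonicity of $a\mapsto\tr^m_a$ is justified only by the slogan ``deleting the smallest action coarsens the sacrifices''; but removing an action changes \emph{both} players' options, there is no domination lemma comparing $\xi(S_a)$ with $\xi(S_{a+1})$, and the claim is only empirically visible in Table~\ref{tabel:truncated} --- without it the plateau/jump decomposition underlying the second bullet is not even well defined. Likewise the count $\#x^m=\lfloor\sqrt{4m-7}\rfloor$ does not follow from the duality together with $\sum_j M_j=m-1$ and $\sum_a\Delta^m_a=m-2$: you additionally assume the jumps are non-decreasing with largest term $\Delta^m_{n-1}=M_2$, neither of which is established, and even granting these the ``triangular threshold'' arithmetic needs the full multiplicity profile, which is again the missing characterization. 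Two details in your sketch are correct and worth keeping: the anchors $\tr^m_1=1$ and $\tr^m_{m-1}=m-1$ via $\xi(\{m-1,m\})=2(m-1)^2$, and your value $M_2=\lfloor (m-1)/2\rfloor$, which agrees with Table~\ref{tabel:truncated} and quietly corrects the $\lfloor m/2\rfloor$ appearing in the statement of Theorem~\ref{thm:duality}, which fails for even $m$.
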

Consider for example $\tr^{10}$. Then $\Delta = (1,1,2,4)$, and $M=(4,2,1,1)$. Careful inspection reveals that the pictures for $m=25,50,100$ satisfy this precise correspondence (and we checked many cases up to $m=200$), but we have up to date no means of explaining this proposed `duality'.

\section{Discussion}
In our work we study four \textit{classes} of CS games,  all with a finite support.  The convergence theorem, Theorem~\ref{Thm:convergence}, tells how to play for any given game with large heap size;  however when the heap size is small, we only have full understanding of optimal play in the classes of 2-actions and full support. As future work we suggest to study  optimal play when heaps are small, in other classes of CS, in particular the class of truncated games.\footnote{A \emph{class} of games should be described by a small finite number of game parameters (on the action set), and the optimal play solution should be described in terms of these parameters only.}

\begin{figure}[htbp]
\centering
    \includegraphics[width=6cm,height=6cm]{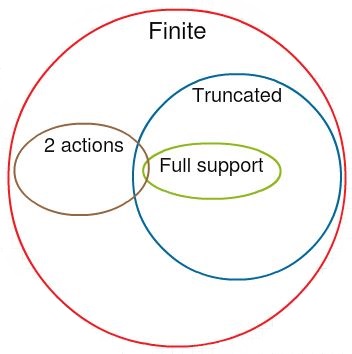}
    \caption{CS classes studied in this paper.} \label{fig:classification}
\end{figure}

For the case $|S|=2$, Theorem~\ref{thm:2actions} states the positions where it is optimal to sacrifice.  The following two observations are immediate from this result. 
\begin{observation}\label{obs:parity_advantage}
    Consider a game with exactly two actions. In optimal play, if a player makes a sacrifice, then she plays the last move. 
\end{observation}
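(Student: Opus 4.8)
The plan is to derive this as a turn-counting corollary of Theorem~\ref{thm:2actions} and Lemma~\ref{lem:complementary}. First I would pin down what a sacrifice is in a two-action game $S=\{s_2,s_1\}$: by Definition~\ref{def:greedy} a sacrifice is a non-greedy action, so a genuine sacrifice means playing $s_2$ from a position $x\ge s_1$ where the greedy action $s_1$ is available. By Theorem~\ref{thm:2actions}, $\opt(x)=s_2$ exactly when $x\in X^*$, and among those positions the ones with $x\ge s_1$ are precisely the positions in $X^*(i)$ with $i\ge 2$: indeed $X^*(1)=\{s_2,\ldots,s_1-1\}$ lies entirely below $s_1$ (there $s_2$ is itself greedy, hence not a sacrifice), while for $i\ge 2$ every element of $X^*(i)$ is at least $2s_2+s_1>s_1$. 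So it suffices to show that whenever a player is to move from $x\in X^*(i)$ with $i\ge 2$ and sacrifices, she makes the terminal move.

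For the core step I would fix the sacrificing player and invoke Lemma~\ref{lem:complementary}, which describes optimal play from $x\in X^*(i)$: the sacrificing player follows the complementary strategy while the opponent plays greedily, so the sacrificing player takes $i$ turns and the opponent takes $i-1$. Since the players alternate and the sacrificing player has the move first in this segment, the sequence of actions is (sacrificer, opponent, sacrificer, $\ldots$, sacrificer), of length $2i-1$ and ending on the sacrificing player; hence she plays the final action. To confirm this is genuinely the terminal move, I would inspect the residual heap: after these $2i-1$ actions exactly $\delta\in\{0,\ldots,\alpha-1\}$ pebbles remain, and since we are in the regime $2s_2>s_1$ (Case (ii) of Theorem~\ref{thm:2actions}), we have $\alpha=s_1-s_2<s_2=\min S$, so $\delta<\min S$ and the position is terminal.

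To handle the case where the sacrificing player is Negative rather than Positive, I would appeal to the symmetry of CS under swapping the two roles and negating the score: the set of positions from which a sacrifice is optimal and the turn counts guaranteed by Lemma~\ref{lem:complementary} depend only on the player to move, not on which role she occupies, so the parity argument of the previous paragraph applies verbatim. I expect the only thing needing care to be the bookkeeping that ties ``a player makes a sacrifice'' to ``the game is at a position in $X^*(i)$, $i\ge 2$, with that player to move''; a single sentence should also record that within one complementary-strategy segment the sacrificing player may play several $s_2$-actions (one from each of $X^*(i),\ldots,X^*(2)$, together with a forced, non-sacrificial $s_2$ from $X^*(1)$), yet all belong to her and the segment still terminates on her move, leaving the conclusion unaffected. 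Beyond this bookkeeping I anticipate no hard step, which matches the paper's claim that the observation is immediate from Theorem~\ref{thm:2actions}.
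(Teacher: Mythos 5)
Your proposal is correct and follows essentially the same route as the paper, which offers no separate proof but declares the observation ``immediate'' from Theorem~\ref{thm:2actions}: you make that immediacy explicit by identifying optimal-play sacrifices with positions in $X^*(i)$, $i\ge 2$, and then reading off from Lemma~\ref{lem:complementary} that the complementary strategy gives the sacrificer $i$ turns against the opponent's $i-1$, so she moves last into a terminal position with $\delta<\alpha<s_2$ pebbles. The extra bookkeeping (ruling out $X^*(1)$ as a sacrifice, the role-swap symmetry, and the cascade through $X^*(i),\ldots,X^*(1)$) is sound and just fills in what the paper leaves implicit.
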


\begin{observation}\label{obs:one player plays greedy} Consider a game with exactly two actions. In optimal play, at least one of the players plays only greedy actions.  
\end{observation}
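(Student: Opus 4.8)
The plan is to reduce the statement to one structural fact that Lemma~\ref{lem:complementary} already supplies: as soon as a player sacrifices during optimal play, the \emph{opponent} plays only greedy actions for the rest of the game. Combined with the trivial remark that before the first sacrifice both players play greedily, this pins down a single player who is greedy throughout.

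First I would record two preliminary reductions. By the score-shift invariance noted after Definition~\ref{def:opt}, the optimal sequence of actions from an interior position $(y,p)$ is the same as from $(y,0)$, so accumulated score is irrelevant; and by the antisymmetry between Positive and Negative, optimal play from any reached position $y$ is governed by the same opt-function whoever is to move, so Lemma~\ref{lem:complementary}'s description applies regardless of the mover's global role. Next I would dispose of Case (i) of Theorem~\ref{thm:2actions}: when $2s_2\le s_1$ greedy play is optimal everywhere, no sacrifice ever occurs, and both players play only greedy, so the observation holds trivially. Thus I may assume $2s_2>s_1$. Here a sacrifice means playing $s_2$ while $s_1$ is available, i.e. $\opt(x)=s_2$ with $x\ge s_1$; since $X^*(1)=\{s_2,\ldots,s_1-1\}$ contains only positions below $s_1$ (where $s_2$ is the forced greedy move), sacrifices occur exactly at positions in $X^*(i)$ with $i\ge 2$.

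Then I would consider the first sacrifice in the canonical (largest-action) optimal play, made by some player $A$ at a position $y\in X^*(i)$, $i\ge 2$, so that $y>s_1$ and $s_1$ is indeed available. Before this move no sacrifice has occurred, so every earlier action by either player is greedy; in particular the opponent $B$ has played only greedy up to $y$. Applying Lemma~\ref{lem:complementary} at $y$, with $A$ in the role of the mover and $B$ in the role of the opponent, $B$'s optimal continuation is the greedy strategy: $B$ plays $s_1$ while available and the forced $s_2$ in the endgame, i.e. only greedy, for the remainder of the game. Since $s_1=\max S$ is the largest action, this is consistent with the largest-action convention. Hence $B$ never sacrifices, which proves the claim; and if no sacrifice ever occurs, both players are greedy and either one works.

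The one point requiring care, and the main (albeit minor) obstacle, is the legitimacy of invoking Lemma~\ref{lem:complementary} at an interior position carrying a nonzero accumulated score and with possibly Negative to move, whereas the lemma is phrased for Positive moving from $(S,x)$ with score $0$. I expect to resolve this exactly through the two reductions above: the score-shift invariance makes the accumulated score immaterial to the optimal action sequence, and the Positive/Negative antisymmetry of CS makes the lemma's conclusion ``the opponent plays greedy'' symmetric in the two players. Once this is granted, the remainder is immediate from the meaning of ``first sacrifice.''
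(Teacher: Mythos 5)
Your proof is correct and takes essentially the same route as the paper, which states the observation as ``immediate'' from Theorem~\ref{thm:2actions}: you simply make that explicit by noting that sacrifices can occur only at positions in $X^*$ with $s_1$ available, and then invoking Lemma~\ref{lem:complementary} (justified mid-game by the score-shift invariance and the Positive/Negative symmetry of the opt-function) to conclude that the opponent of the first sacrificer plays greedily for the rest of the game. Since the paper offers no further detail, there is no divergence to report; your two reductions are exactly the bookkeeping the paper leaves implicit.
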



For games with more than 2 possible actions the observations do not hold any more.
\begin{example}\label{ex:1,5,7}
Let $S=\{1,5,7\}$ played from position $x=18$.  The (unique) optimal play sequence is $5;7;5;1$, showing that sometimes it is beneficial to sacrifice without playing last.  Actually Positive sacrifices in order to play the last `big' action.
\end{example}

For games with $|S| \ge 4$ It is not true that only one player sacrifices in optimal play. Consider the following example
\begin{example} \label{ex:2,10,13,14}
    Let $S=\{2,10,13,14\}$ played from position $x=35$.
   The unique optimal play sequence is $10;13;10;2$, and the first two actions are both sacrifices. 
\end{example} 

This example triggers another question: is it true that when both players sacrifice, Negative makes a smaller sacrifice than Positive?
\begin{conjecture}
    In a game were optimal play includes sacrifices by both players, Negative's sacrifice is smaller than Positive's sacrifice. (In Example~\ref{ex:2,10,13,14} Positive sacrifices 4 while Negative sacrifices 1).
\end{conjecture}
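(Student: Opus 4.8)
The plan is to work with the canonical optimal line obtained by iterating $\opt$ (Definition~\ref{def:opt}) and repeatedly invoking Lemma~\ref{lem:big_before}, so that each player's own actions are non-increasing. Writing the resulting action sequence as $a_1,a_2,\ldots,a_n$, with Positive on the odd indices and Negative on the even ones, and letting $g_k$ denote the greedy action (Definition~\ref{def:greedy}) at the $k$-th position, every move carries a sacrifice $\sigma_k=g_k-a_k\ge 0$. I would set $d_P=\max_{k\ \mathrm{odd}}\sigma_k$ and $d_N=\max_{k\ \mathrm{even}}\sigma_k$ and reduce the statement to proving $d_N<d_P$ whenever both are positive (this is the natural reading of ``the sacrifice'' of each player; in Example~\ref{ex:2,10,13,14} each side sacrifices once, giving $d_P=4>1=d_N$). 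A first auxiliary step is a monotonicity lemma showing that each player's largest sacrifice is realized at that player's \emph{first} deviation from greedy, so that one may compare just two decisive sacrifices; this should follow from the non-increasing structure of the canonical line together with the outcome-stabilization phenomenon that drives convergence ($\xi(S)\le 2(\max S)^2$, Theorem~\ref{Thm:convergence}).

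With the two decisive sacrifices isolated, I would argue by contradiction: assume $d_N\ge d_P\ge 1$. The attack combines two tools. First, a \emph{local two-action reduction}: at a player's decisive sacrifice the binding choice is between $\max S$ and the played action $\max S-d$, a two-element subproblem with gap $\alpha=d$; the analysis behind Theorem~\ref{thm:2actions} shows such a sacrifice can be optimal only when it buys a turn-parity advantage, which already forces $d<\tfrac{1}{2}\max S$ for \emph{each} player. Second, a \emph{first-mover strategy-stealing} argument: since Positive moves first and her optimal outcome satisfies $o(x)\ge 0$ by Lemma~\ref{lem:bounded_outcome}, I would attempt to show that any parity advantage Negative extracts by a sacrifice of size $d_N$ can be extracted by Positive one tempo earlier using her larger greedy action, so that if $d_N\ge d_P$ then Positive has an explicit deviation strictly raising the outcome above its optimal value, a contradiction.

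The engine tying these together is a turn/pebble accounting identity. Writing $o(x)=\sum_{k\ \mathrm{odd}}a_k-\sum_{k\ \mathrm{even}}a_k$, a sacrifice of size $d$ ``saves'' exactly $d$ pebbles, and I would track how these saved pebbles are what funds an extra action for the sacrificing side. Comparing this ledger across the two players, and using that Positive is entitled to the parity surplus in even-length greedy play, I expect to show that Negative's sacrifice can only \emph{neutralize} a surplus that Positive already paid to create, bounding $d_N$ by (a fraction of) $d_P$. Here I would also use Lemma~\ref{lem:sacrbound}-type counting to limit the number of decisive sacrifices per side in the regimes that matter.

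The hard part will be that, unlike the two-action and full-support classes (Theorems~\ref{thm:2actions} and~\ref{thm:fullsup}), no structural description of optimal play is available for general $|S|\ge 3$; Example~\ref{ex:1,5,7} and the failure of Observation~\ref{obs:parity_advantage} show that the clean ``the sacrificer plays last'' heuristic breaks down, so the parity bookkeeping above is genuinely non-local and the two-action embedding discards the cross-talk between the two sacrifices through the shared heap. Making the strategy-stealing deviation precise -- proving that Positive can always realize Negative's parity gain at least as cheaply -- is where I expect the real obstacle to lie. I would therefore first settle the case $|S|=3$, and verify the conjecture exhaustively for small $\max S$, in order to calibrate the correct invariant before attempting the general argument.
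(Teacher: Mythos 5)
First, a point of calibration: the statement you are proving is stated in the paper as an open \emph{conjecture} --- the authors give no proof, only the single data point of Example~\ref{ex:2,10,13,14}. So there is no ``paper proof'' to match; the only question is whether your proposal itself constitutes a proof. It does not: it is a research program, and you say so yourself at several points (``I would attempt to show'', ``where I expect the real obstacle to lie''). That honesty is appropriate, but let me name the specific places where the plan, as written, would fail rather than merely remain incomplete.

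The first genuine gap is the ``monotonicity lemma'' asserting that each player's largest sacrifice occurs at that player's first deviation from greedy. Lemma~\ref{lem:big_before} lets you normalize each player's \emph{actions} to be non-increasing, but the sacrifice $\sigma_k=g_k-a_k$ is measured against a greedy reference $g_k$ that shrinks as the heap approaches the endgame (the greedy action is $\max\{s\in S\mid s\le x\}$, not $\max S$), so non-increasing actions do not give non-increasing sacrifices, and nothing in Theorem~\ref{Thm:convergence} supplies this; the convergence bound says optimal play contains few sacrifices, not where they sit. The second, more serious gap is the ``local two-action reduction''. Restricting attention to the binding choice between $\max S$ and $\max S - d$ does not embed the position into a two-action game in the sense of Theorem~\ref{thm:2actions}, because the \emph{opponent} retains the full action set $S$; the entire analysis of Section~\ref{sec:two_action} (the complementary strategy, $X^*$, Lemma~\ref{lem:complementary}) presupposes both players confined to $\{s_2,s_1\}$. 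Consequently your claimed bound $d<\tfrac{1}{2}\max S$ per player has no derivation, and the paper's own Example~\ref{ex:1,5,7} already shows that the mechanism you invoke to justify it --- a sacrifice is optimal only when it buys turn parity, i.e.\ Observation~\ref{obs:parity_advantage} --- is false outside the two-action class, which you acknowledge but then continue to rely on. Finally, the strategy-stealing deviation (Positive realizing Negative's parity gain ``one tempo earlier'') is exactly the content of the conjecture restated in strategic language, and no construction is offered; the pebble-ledger identity $o(x)=\sum_{k\ \mathrm{odd}}a_k-\sum_{k\ \mathrm{even}}a_k$ is true but inert without it. Your closing suggestion --- settle $|S|=3$ and verify small $\max S$ exhaustively to calibrate an invariant --- is the right next step, but as it stands the proposal leaves the conjecture as open as the paper does.
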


\section{Multi pile CS} 
CS can be extended naturally to multiple piles.  In CS with multiple piles, on each turn the active player first chooses a pile, then plays as in the single pile game on that pile.  (In the CGT jargon, this is disjunctive sum play.) 

By looking at many games with two piles such as in Figures~\ref{fig:2piles57} and \ref{fig:2piles2,10,13,14} we observe convergence to the greedy action and periodicity in the outcome. By using similar arguments as in the folklore for classical subtracting games, one can show that in CS with two piles, the outcome is eventually periodic on any horizontal or vertical line. Here, we strengthen this result to a conjecture in the spirit of Theorem~\ref{Thm:convergence}.
\begin{conjecture}
    Consider CS on two piles. The outcome is eventually periodic on any horizontal or vertical line, with period at most $2\max{S}$. 
\end{conjecture}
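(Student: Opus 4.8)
The plan is to fix the second coordinate $x_2$ and prove eventual periodicity of $x_1\mapsto o(x_1,x_2)$ by induction on $x_2$, writing $m=\max S$. Disjunctive-sum play gives the recursion
\[
o(x_1,x_2)=\max\Bigl(\max_{s\in S,\,x_1\ge s}\{s-o(x_1-s,x_2)\},\ \max_{s\in S,\,x_2\ge s}\{s-o(x_1,x_2-s)\}\Bigr),
\]
with $o(x_1,x_2)=0$ once both piles are below $\min S$. Two preliminary facts drive everything. First, the score decomposes additively by pile: the terminal result equals (Positive's total minus Negative's total taken from pile~1) plus the same quantity for pile~2. Second, $o$ is bounded by a constant depending only on $m$: Negative can \emph{follow} Positive, always replying on the pile Positive just played, which keeps the play on each pile an alternation and hence keeps each pile's net contribution within $[-m,m]$ by the single-pile bound of Lemma~\ref{lem:bounded_outcome}; a symmetric strategy bounds $o$ from below. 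Because $o(x_1,x_2)$ takes finitely many values for fixed $x_2$, the goal reduces to showing that $o(x_1+2m,x_2)=o(x_1,x_2)$ for all large $x_1$, since this forces the minimal period to divide $2m$.

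First I would dispose of the base case $x_2<\min S$: pile~2 is dead, the game is the single pile on pile~1, and Corollary~\ref{cor:periodicity} gives period $2m$. For the inductive step, fix $x_2$ and set $g(x_1):=\max_{s\in S,\,x_2\ge s}\{s-o(x_1,x_2-s)\}$, the value of the best \emph{pile-2} first move. By the induction hypothesis each $o(\,\cdot\,,x_2-s)$ is eventually periodic with period dividing $2m$, so $g$ is eventually periodic with period dividing $2m$. The recursion now reads $o(x_1,x_2)=\max\bigl(\max_{s\in S,\,x_1\ge s}\{s-o(x_1-s,x_2)\},\,g(x_1)\bigr)$: this is exactly a single-pile CS on pile~1 in which each position additionally offers the ``external'' option $g(x_1)$, a known period-$2m$ function. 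A purely structural (folklore) argument already yields \emph{some} eventual period here: the window $\bigl(o(x_1-1,x_2),\dots,o(x_1-m,x_2)\bigr)$ together with $x_1\bmod 2m$ determines $o(x_1,x_2)$ and evolves deterministically through a finite state space, so the sequence is eventually periodic---but with period only known to be a \emph{multiple} of $2m$.

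To upgrade this to a period dividing $2m$, I would adapt the convergence argument of Theorem~\ref{Thm:convergence} to show that for all sufficiently large $x_1$ both players optimally answer with the greedy pile-1 action: $o(x_1,x_2)=m-o(x_1-m,x_2)$ with the maximum attained by the move $s=m$ on pile~1, and likewise at $x_1-m$. Composing these two identities gives $o(x_1,x_2)=m-\bigl(m-o(x_1-2m,x_2)\bigr)=o(x_1-2m,x_2)$, exactly as in Lemma~\ref{lem:converges->periodic}, which is the desired relation. The heart of the matter is to bound the number of \emph{pile-1 sacrifices} in optimal play: using the additive score decomposition, if a player makes at least $m$ pile-1 sacrifices while the opponent answers greedily on pile~1, the pile-1 contribution is pushed past $m$ in absolute value, while the pile-2 contribution stays within $[-m,m]$, contradicting the global bound on $o$; a reordering argument in the spirit of Lemma~\ref{lem:big_before} then suggests the greedy pile-1 action is optimal once $x_1$ exceeds the single-pile threshold $2m^2$.

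The step I expect to be the genuine obstacle is precisely this last claim, that greedy pile-1 play is optimal for large $x_1$. In the single-pile proof the opponent's ``greedy response'' is unambiguous, but on two piles the responder must also choose \emph{which} pile to play, and the two piles are coupled only through the global parity of the number of moves. A player may rationally sacrifice on pile~1 (or decline a greedy pile-1 move in favour of the external option $g$) solely to win or lose the last move on pile~2, so pile~1 cannot be analysed in isolation and the clean contradiction of Theorem~\ref{Thm:convergence} does not transfer verbatim. Overcoming this appears to require a genuinely two-pile version of the complementary strategy of Lemma~\ref{lem:complementary} together with careful parity bookkeeping that tracks how pile-1 tempo is ``spent'' on pile~2; this is, I believe, exactly the difficulty that led the authors to leave the statement as a conjecture.
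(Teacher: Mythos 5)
This statement is left as a \emph{conjecture} in the paper: the authors give no proof, and the only assertion they make in its vicinity (as unproven folklore) is the strictly weaker fact that the two-pile outcome is eventually periodic on every horizontal or vertical line, with no bound on the period. Your proposal, read carefully, establishes at most that weaker folklore fact and then stalls at exactly the point where the conjecture goes beyond it. Your finite-state argument (the window $\bigl(o(x_1-1,x_2),\dots,o(x_1-m,x_2)\bigr)$ together with $x_1 \bmod 2m$, granting uniform boundedness of $o$) does yield \emph{some} eventual period, but only one known to be a multiple of $2m$, with no control on the multiplier. The entire content of the conjecture is the upgrade to period at most $2\max S$, and in your outline that upgrade rests on the unproven claim that for all large $x_1$ the greedy pile-1 move attains the maximum in the recursion both at $x_1$ and at $x_1-m$, so that the two identities compose as in Lemma~\ref{lem:converges->periodic}. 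Your attempted transfer of Theorem~\ref{Thm:convergence} to this claim fails for the reason you yourself identify: that proof needs the opponent to have a response strategy (greedy, on the same pile) under which every sacrifice is an unrecoverable loss, but on two piles a pile-1 sacrifice can purchase the last move on pile 2, and the additive score decomposition cannot exclude that this tempo gain outweighs the accumulated pile-1 sacrifices. Since you honestly flag this step as an obstacle rather than close it, the proposal is not a proof; it is a correct reduction of the conjecture to an open claim.

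Two secondary points. First, even the boundedness lemma you lean on is not yet proved: Negative cannot always ``follow'' on the pile Positive just played (that pile may have fallen below $\min S$), and once Negative is forced onto the other pile the per-pile alternation breaks, so the pile-wise invocation of Lemma~\ref{lem:bounded_outcome} needs repair (the paper's figures support boundedness empirically, e.g.\ $0\le o\le \max S$ for $S=\{5,7\}$, but no proof is given there either). Second, your induction on $x_2$ quietly strengthens the statement from ``period at most $2m$'' to ``period dividing $2m$''; that strengthening is what makes $g$ inherit a period dividing $2m$, so if you ever complete the argument you should state and prove the divisibility form throughout. A complete proof along your lines --- in particular of the greedy-for-large-$x_1$ claim, perhaps via a genuinely two-pile analogue of the complementary strategy of Lemma~\ref{lem:complementary} --- would settle the paper's open conjecture rather than reprove a known result.
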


In addition, we observe regularity of the outcomes along diagonal half-lines of the form $(x,k+x)$, called $k$-diagonals, for any constant $k\in \Z$. 
\begin{conjecture}
    Consider CS on two piles.  The outcome is eventually periodic along any $k$-diagonal.
\end{conjecture}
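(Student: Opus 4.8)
The final statement to prove is the conjecture that the outcome along any $k$-diagonal $(x, k+x)$ in two-pile CS is eventually periodic. Since this is a conjecture rather than a theorem, I cannot offer a complete proof, but let me sketch the approach I would pursue, assuming it is in fact true.

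\medskip

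\noindent\textbf{Proposed approach.} The plan is to reduce the two-dimensional recursion for the two-pile outcome to an effectively one-dimensional, finitely-controlled dynamical system along the diagonal, and then invoke a pigeonhole / finite-state argument to force eventual periodicity. First I would write down the outcome recursion for disjunctive sum play: the value $O(x,y)$ of the two-pile position is governed by the usual minimax over the moves available in each pile, so Positive maximizes and Negative minimizes over choices of pile and action $s\in S$, giving $O(x,y) = \max_{s\in S}\{\,\max(s - O(x-s,y),\, s - O(x,y-s))\,\}$ for Positive's move (with the symmetric minimizing form when we track whose turn it is). The key structural input is that, by Corollary~\ref{cor:periodicity}, each single-pile outcome $o(\cdot)$ is eventually periodic with period $2\max S$ and bounded in $[0,\max S]$ by Lemma~\ref{lem:bounded_outcome}. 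The boundedness is the crucial finiteness: the outcome takes only finitely many integer values.

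\medskip

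\noindent\textbf{Key steps in order.} Second, I would restrict to a fixed $k$-diagonal and define $f(x) := O(x, k+x)$, studying how $f(x)$ relates to values $f(x')$ with $x' < x$ and to single-pile boundary data. The heart of the argument is to show that $f(x)$ is determined by a \emph{finite window} of bounded data: namely by the optimal-action structure of each pile near the current position. Third, once convergence to greedy play has been established in each pile (Theorem~\ref{Thm:convergence}), for large $x$ both coordinates $x$ and $k+x$ lie past their respective convergence thresholds, so the only relevant moves are the greedy/periodic ones; the ``state'' of the diagonal position is then captured by the pair of residues $(x \bmod 2\max S,\ (k+x)\bmod 2\max S)$ together with the bounded outcome value. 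Fourth, because $k$ is fixed, the second residue is determined by the first, so the state space is finite, and the transition from $f(x)$ to $f(x + 2\max S)$ becomes a self-map on this finite set. Fifth, I would apply the pigeonhole principle: the sequence of states must eventually revisit a state, and by an argument analogous to Lemma~\ref{lem:converges->periodic} (where equal complementary actions $s - o(x-s) + \cdots$ telescope), the return to a state forces the outcome sequence to cycle.

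\medskip

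\noindent\textbf{Main obstacle.} The hard part will be step three: establishing that the two-dimensional minimax genuinely reduces to a finite-state system along the diagonal. In the single-pile case the recursion $o(x) = \max_{s}\{s - o(x-s)\}$ depends only on a bounded window $\{o(x-s): s\in S\}$, which is what makes the eventual periodicity in Corollary~\ref{cor:periodicity} work. But in two piles, $O(x,y)$ depends on values $O(x-s, y)$ \emph{and} $O(x, y-s)$, so the natural recursion reaches off the diagonal, and a priori the off-diagonal values could encode unbounded ``memory.'' The folklore result quoted just before the conjecture already secures periodicity along horizontal and vertical lines, so the genuine difficulty is to argue that the off-diagonal reach is tamed: one would need to show that, for large $x$, the relevant off-diagonal values $O(x-s, k+x)$ are themselves governed by finitely many periodic patterns inherited from the horizontal/vertical periodicity, so that the diagonal recursion closes up on a finite state space. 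Making this reduction rigorous — controlling how horizontal and vertical periods interact to produce a diagonal period bounded in terms of $\max S$ and the least common multiple of the coordinate periods — is precisely where I expect the proof to require the most care, and it is likely why the authors leave it as a conjecture rather than a theorem.
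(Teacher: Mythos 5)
This statement is left as an \emph{open conjecture} in the paper: the authors give no proof of it (nor even of the weaker horizontal/vertical periodicity, which they attribute to ``folklore'' arguments and then strengthen into yet another conjecture), so there is no proof of theirs to compare yours against. You correctly recognize this and offer a programme rather than a proof, which is the right posture for an open statement.

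That said, your sketch contains a concrete unjustified step beyond the obstacle you name at the end. In your third step you assert that once $x$ and $k+x$ exceed the single-pile convergence thresholds, ``the only relevant moves are the greedy/periodic ones,'' so that the state of a diagonal position is captured by the residues modulo $2\max S$ plus a bounded outcome value. But Theorem~\ref{Thm:convergence} and Corollary~\ref{cor:periodicity} concern the one-pile recursion $o(x)=\max_{s}\{s-o(x-s)\}$; they say nothing about the two-pile minimax $O(x,y)$, in which the optimal choice of pile and action depends on the \emph{joint} position. Scoring games under disjunctive sum do not decompose into their component values (there is no Sprague--Grundy-type theorem available here), and indeed whether two-pile optimal play eventually becomes greedy is precisely what the authors only observe empirically in Figures~\ref{fig:2piles57} and~\ref{fig:2piles2,10,13,14} and fold into their companion conjectures --- it cannot be used as an ingredient. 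Without it, your finite state space does not determine the transitions: $f(x+2\max S)$ depends on off-diagonal values such as $O(x-s,\,k+x)$, and nothing bounds the ``memory'' those values carry, so the pigeonhole step has no well-defined self-map to act on. You do flag the off-diagonal reach as the main obstacle, which is honest, but the difficulty is more basic than ``controlling how horizontal and vertical periods interact'': even the claim that large two-pile positions inherit single-pile periodic structure is unproven, and establishing it would essentially amount to resolving the conjecture (or its horizontal/vertical companion) itself.
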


\begin{figure}[htbp]
\centering
   \includegraphics[width=13cm,height=13cm]{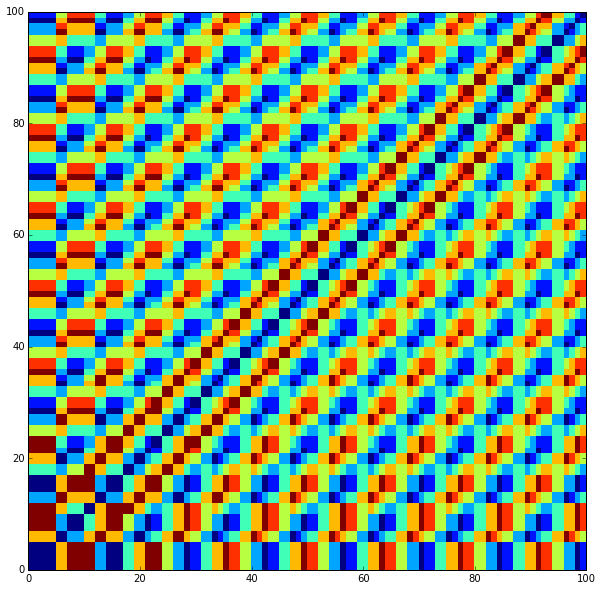}

    \caption{Outcomes for the game $S=\{5,7\}$ with two piles, starting from a position of the form $(x_1,x_2)$. The outcome is bounded between 0 and 7, low outcomes are painted in blue while high outcomes in red.  \label{fig:2piles57}}
\end{figure}

\begin{figure}[htbp]
\centering
    \includegraphics[width=13cm,height=13cm]{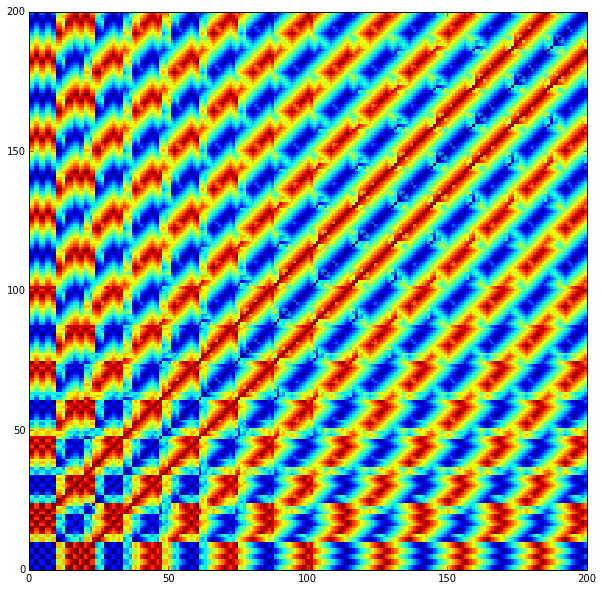}
    \caption{Outcomes for the game $S=\{2, 10, 13, 14\}$ with two piles, starting from a position of the form $(x_1,x_2)$. The outcome is bounded between 0 and 14, low outcomes are painted in blue while high outcomes in red.} \label{fig:2piles2,10,13,14}
\end{figure}


\end{document}